\newtheorem{theorem}{Theorem}
\newtheorem{lemma}{Lemma}
\newtheorem{problem}{Problem}
\newtheorem{claim}{}[lemma]
\newtheorem{claim2} {Claim}
\newtheorem{cor}{Corollary}
\def \no {\noindent}
\def \sm {\setminus}
\def \es {\emptyset}
\newenvironment{proof}[1][]%
{\noindent {\setcounter{equation}{0}\it Proof.
	}{#1}{}}{\hfill$\Box$\vspace{2ex}}
\newcommand*\sq{\mathbin{\vcenter{\hbox{\rule{0.75ex}{1.0ex}}}}}
\begin{document}
\title{Tight Chromatic bounds for ($P_2+P_4$, $K_4-e$)-free graphs}
\author{C.~U.~Angeliya\thanks{Computer Science Unit, Indian Statistical
Institute, Chennai Centre, Chennai 600029, India. This research is   supported by National Board of Higher Mathematics (NBHM), DAE, Government of India (No.  02011/55/2023/R\&D-II/3736).}  \and T.~Karthick\thanks{Corresponding author, Computer Science Unit, Indian Statistical
Institute, Chennai Centre, Chennai 600029, India. Email: karthick@isichennai.res.in. ORCID: 0000-0002-5950-9093.  This research is   partially supported by National Board of Higher Mathematics (NBHM), DAE, Government of India (No. 02011/55/2023 NBHM (R. P)/R\&D II/16733).}\and Shenwei Huang\thanks{Co-corresponding author. School of Mathematical Sciences and LPMC, Nankai University, Tianjin 300071, China. Supported by National Natural Science Foundation of China (12161141006) and Natural Science Foundation of Tianjin (20JCYBJC01190). Email: shenweihuang@nankai.edu.cn} }

\maketitle

\begin{abstract}
 For a graph $G$, $\chi(G)$ and $\omega(G)$ respectively denote the chromatic number and clique number of $G$.   In this paper, we show the following results:
  \begin{itemize}
  \item If $G$ is a  ($P_2+P_4$, $K_4-e$)-free graph with $\omega(G)\geq 3$, then $\chi(G)\leq \max\{6, \omega(G)\}$, and   the bound is tight for each $\omega(G)\notin \{4,5\}$.
  \item If $G$ is a  ($P_2+P_4$, $K_4-e$)-free graph with $\omega(G)= 4$, then $\chi(G)= 4$.
  \end{itemize}
 These results extend   the  chromatic bounds known for the class of ($P_2+P_2$, $K_4-e$)-free graphs and for the class of ($P_2+P_3$, $K_4-e$)-free graphs, improve  the bound of Chen and Zhang [arXiv:2412.14524 [math.CO], 2024]  given for the class of ($P_2+P_4$, $K_4-e$)-free graphs, partially answer  a question of Ju and the third author [Theor. Comp. Sci. 993 (2024) Article No.: 114465] on `near optimal colorable graphs',   and    a question of Schiermeyer (unpublished) on the chromatic bound for ($P_7$, $K_4-e$)-free graphs.
\end{abstract}

\medskip
\no{\bf Keywords}:~Graph classes;   Chromatic number; Clique number; Near-optimal colorable graphs.

\section{Introduction}
All our graphs are finite,  non-null, simple and undirected, and we refer to West \cite{west} for undefined notation and terminology.   For an integer $t\geq 1$, let $P_{t}$ and $K_t$ respectively  denote  the chordless path and the complete graph  on $t$ vertices. For an integer $t\geq 2$, a  $K_{t}-e$ is the graph obtained from $K_{t}$  by removing an edge.    For an integer $t\geq 3$, let $C_{t}$    denote  the chordless cycle on $t$ vertices.
An \emph{odd hole} is the graph $C_{2t+1}$  where $t\geq 2$, and an \emph{odd antihole}  is the complement graph of an odd hole.
We say that a graph $G$ \emph{contains} a graph $H$ if $H$ is an induced subgraph of $G$.
Given a   class of graphs $\cal F$,  we say that  a graph $G$ is {\it $\cal F$-free} if $G$ does not contain a graph of $\cal F$. The
\emph{union} of two vertex-disjoint graphs $G_1$ and $G_2$, denoted by $G_1+G_2$, is the graph
with the vertex-set $V(G_1)\cup V(G_2)$ and the edge-set $E(G_1)\cup E(G_2)$.

Let $G = (V, E)$ be a graph with vertex-set $V(G)$ and edge-set $E(G)$. For a graph $G$ and $v\in V(G)$,  $N(v)$ is the   set of neighbors of  $v$ in $G$, and the \emph{degree} of $v$, $deg(v):=|N(v)|$.   Two  nonadjacent vertices $u$ and $v$ in a graph $G$ are said to be \emph{comparable}, if  either $N(u) \subseteq N(v)$ or $N(v) \subseteq N(u)$.
We say that a graph $G$ is \textit{good}  if  $G$ has a pair of comparable vertices or there is a vertex with degree at most 3, and we say that $G$ is \emph{bad} otherwise. For a graph $G$ and a vertex-subset $X$ of $G$, the graph $G[X]$ is subgraph induced by $X$ in $G$, and  $G-X$ is the induced subgraph $G[V(G)\sm X]$.

A {\it $k$-coloring} of a graph $G$ is a function $f: V(G)\rightarrow \{1,2,\ldots, k\}$ such that $f(u)\neq f(v)$ whenever $u$ and $v$ are adjacent vertices in $G$.
As usual, for a graph $G$, $\chi(G)$ and $\omega(G)$ respectively stand for the chromatic number and the clique number of $G$. Obviously   for every graph $G$, we have $\chi(G)\geq \omega(G)$. A graph $G$ is \emph{perfect} if  each of its induced subgraph $H$ satisfies $\chi(H)=\omega(H)$. It is well-known that the class of perfect graphs include  the class of bipartite graphs, the class of complement graphs of bipartite graphs and the class of $P_4$-free graphs, etc.
A celebrated result of Chudnovsky et al. \cite{spgt} gives a necessary and sufficient condition for the class of perfect graphs via forbidden induced subgraphs, and is now known as the `Strong perfect graph theorem  (SPGT)'. It states that \emph{a graph is perfect if and only if it does not contain an odd hole or an odd antihole}.

The problem of coloring  the class of ($P_r+P_t$)-free graphs, where $r,t\geq 2$ and its subclasses have been studied in a variety of contexts recently; see \cite{BC,AK3,CLZ,KMMNPS,LLW,SR-Poly-Survey,PFF}. Here  we are interested in the problem of obtaining tight chromatic bounds for the class of ($P_r+P_t$, $K_4-e$)-free graphs, and \cref{tab} gives the current status of the problem for ($H$, $K_4-e$)-free graphs for some $H\cong P_r+P_t$. Since the class of ($P_r+P_t$, $K_4-e$)-free graphs is a subclass of the class of ($P_{r+t+1}$, $K_4-e$)-free graphs, it is known from a  general result of Schiermeyer and Randerath \cite{SR-Poly-Survey} that  every  ($P_r+P_t$, $K_4-e$)-free graph $G$ with $\omega(G)=k\geq 2$ satisfies $\chi(G)\leq (k-1)(r+t-1)$.

 \begin{table}[t]
 \begin{center}
 {\footnotesize{
\begin{tabular}{|p{1cm}V{3.0}p{1.48cm}p{0.7cm}|p{1.45cm}p{0.28cm}|p{1.45cm}p{0.28cm}|p{1.65cm}p{0.2cm}|cp{0.2cm}|}
\hline
 ~~~~$H$   &  $\omega(G) =2$ && $\omega(G) =3$& & $\omega(G) =4$ & & $\omega(G) =5$ &&~~$\omega(G) =k\geq 6$&\\
\hlineB{3.0}
 $P_2+P_2$ & $\chi(G) \leq 3$ $^\star$  &\cite{BRSV} & $\chi(G)= 3$ $^\star$  &\cite{BRSV} & $\chi(G) = 4$ $^\star$ & \cite{BRSV}  & $\chi(G) = 5$ $^\star$ & \cite{BRSV} & $\chi(G) = k$ $^\star$  &\cite{BRSV} \\
\hline
$P_2+P_3$ & $\chi(G) \leq 4$ $^\star$  &\cite{RS-Survey}  & $\chi(G)\leq 6$ $^\star$  &\cite{KSM-GC} & $\chi(G) = 4$ $^\star$  &\cite{PFF}  & $\chi(G) = 5$ $^\star$ & \cite{BC}& $\chi(G)=k$ $^\star$  &\cite{BC}\\
\hline
$P_2+P_4$ & $\chi(G) \leq 4$ $^\star$  &\cite{BC,  BGPS}  & $\chi(G)\leq 6$ $^\star$  & *  & $\chi(G) = 4$ $^{\star}$ & *  & $\chi(G) \leq 6$ $^?$ & *   &   $\chi(G)=k$  $^\star$    & * \\

\hline
$P_3+P_3$ & $\chi(G) \leq 4$ $^\star$  &\cite{Pya}  & ~~~~~? & & ~~~~~?&& ~~~~~?&  &  ? & \\
\hline

%$P_3+P_4$ & $\chi(G) \leq 6$ $^\circledast$  &\cite{SR-Poly-Survey}  & ~~~~~? & & ~~~~~?&& ~~~~~?&  &  ? & \\
%\hline
\end{tabular}

\caption{Chromatic bounds for some  ($H$, $K_4-e$)-free graphs $G$ with given clique number, where $H\cong P_r+P_t$. Here  `$*$' indicates the contributions of this paper, `$\star$' indicates that the given bound is tight, and `?' indicates that the problem of finding the tight chromatic bound is open. }\label{tab}
}}
\end{center}
\end{table}

 A class of graphs $\cal C$ is  \emph{near optimal colorable} \cite{Ju-Huang} if there exists a constant $c\in \mathbb{N}$ such that $\chi(G) \leq \max\{c,~\omega(G)\}$ for each graph $G\in \cal C$. Obviously  the class of near optimal colorable graphs  includes the class of perfect graphs  and it is an interesting  subclass of the class of $\chi$-bounded graphs which are well explored in the literature; see  \cite{AK-Survey, SR-Poly-Survey}.
Among other intriguing   questions,  Ju and the third author \cite{Ju-Huang} posed  the following meta problem.

\begin{problem} [\cite{Ju-Huang}] \label{JH-Problem}
Given a  class of $(F, K_{\ell}-e)$-free graphs $\cal G$, where  $F$ is  a forest which is not an induced subgraph of a $P_4$ and $\ell\geq 4$, does there exist a  constant $c\in \mathbb{N}$ such that  $\cal G$ is near optimal colorable?  If so, what is the smallest possible constant $c$?
\end{problem}

It is known that   the class of ($P_5, K_5-e$)-free graphs \cite{AK}, and the class of ($P_6$, $K_4-e$)-free graphs are near optimal colorable \cite{GHJM}. Many classes of $(F, K_{\ell}-e)$-free graphs for various $F$ are now known to be near optimal colorable; see \cite{AAK, CKLV,  Ju-Huang-2, SR-Poly-Survey}  and the references therein.
Here we   focus on \cref{JH-Problem} when $F=P_{2}+P_{t}$ where $t\geq 2$. While the class of ($P_2+P_2$, $K_4-e$)-free graphs and the class of ($P_2+P_3$, $K_4-e$)-free graphs are near optimal colorable (see \cref{tab}), surprisingly  the status of \cref{JH-Problem} is unknown even when $F=P_2+P_4$ and $\ell=4$. First we note the following.

%\medskip
%
%Let $G$ be a ($P_2+P_3$, $K_4-e$)-free graph with $\omega(G)\geq 2$. Bharathi and Choudum \cite{BC} showed that if  $\omega(G)=2$, then $\chi(G)\leq 4$ and that the bound is tight, and if $\omega(G)\geq 5$, then $G$ is perfect.    The second author with Mishra \cite{KSM-GC} showed that if  $\omega(G)= 3$, then $\chi(G)\leq 6$, and that the bound is tight. Prashant et al.  \cite{PFF} showed that if $\omega(G)= 4$, then $\chi(G)= 4$.
%In this paper,    we are interested in extending these results for the class of ($P_2+P_4$, $K_4-e$)-free graphs, and in answering \cref{JH-Problem} when $F=P_2+P_4$ and $\ell=4$.

\begin{theorem}[\cite{BC}]\label{w2} If $G$ is a ($P_2+P_4$, $K_4-e$)-free graph with $\omega(G)= 2$, then $\chi(G)\leq 4$  and   the bound is tight.
\end{theorem}

Very recently, Chen and Zhang \cite{CW} showed that if $G$ is a ($P_2+P_4$, $K_4-e$)-free graph with $\omega(G)= 3$, then $\chi(G)\leq 7$
and if $\omega(G) = 4$, then $\chi(G)\leq 9$, and that $\chi(G)\leq 2\omega(G) - 1$ when  $\omega(G) \geq 5$.
Here we improve the above results of Chen and Zhang \cite{CW} and prove the following results.

\newpage
\begin{theorem}\label{main-c5c7}
 Let $G$ be a ($P_2+P_4$, $K_4 -e$)-free graph with $\omega(G)\geq 3$. Then one of the following holds.
  \begin{enumerate}[label=($\roman*$)]\itemsep=0pt
 \item $G$ is perfect.
 \item If $G$ contains a  $C_5$, then either $\chi(G)\leq 6$ or $\chi(G)= \omega(G)$.
 \item If $G$ is $C_5$-free  and contains a  $C_7$, then   $\chi(G)\leq 3$.
 \end{enumerate}
 \end{theorem}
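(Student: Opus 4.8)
The plan is to first invoke the Strong Perfect Graph Theorem to reduce the statement to a dichotomy on which odd holes can survive, and then to handle the two non-perfect regimes separately. I would begin by recording two elementary observations. First, every odd hole $C_{2k+1}$ with $k\geq 4$ contains an induced $P_2+P_4$: writing the hole as $v_1v_2\cdots v_{2k+1}$, the edge $v_1v_2$ together with the induced path $v_4v_5v_6v_7$ forms an induced $P_2+P_4$, since for $k\geq 4$ the seven indices are distinct and the two parts are anticomplete. Second, every odd antihole $\overline{C_n}$ with $n\geq 7$ contains an induced $K_4-e$: the vertices $\{1,3,5,6\}$ induce a diamond, because $5$ and $6$ are the only pair among them that is consecutive on $C_n$. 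As $\overline{C_5}=C_5$ is triangle-free, these two facts combined with the SPGT show that a $(P_2+P_4,K_4-e)$-free graph is perfect if and only if it contains neither $C_5$ nor $C_7$. This proves $(i)$ and reduces everything to the two cases in which a $C_5$ or a $C_7$ is present.

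Next I would exploit the structure forced by $(K_4-e)$-freeness: the neighbourhood of every vertex is a disjoint union of cliques, so for any maximal clique $\Omega$ each vertex outside $\Omega$ has at most one neighbour in $\Omega$. Together with $C_5$-freeness this pins down how a vertex may attach to a fixed hole; for a $C_7=v_1\cdots v_7$, a vertex adjacent to $v_i$ and $v_{i+2}$ must miss $v_{i+1}$ (otherwise a diamond appears), and adjacency to $v_i$ and $v_{i+3}$ while missing the two intermediate vertices creates a $C_5$. For $(iii)$, with a fixed $C_7$ and no $C_5$, the plan is to use such case checks to prove first that $\omega(G)=3$ — an additional $K_4$ cannot coexist with the $C_7$ without producing a $P_2+P_4$ or a $C_5$ — and then that the admissible attachment types to the hole form a short list whose vertices can all reuse a fixed proper $3$-colouring of $C_7$, giving $\chi(G)\leq 3$.

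For the main case $(ii)$ I would fix a maximum clique $\Omega=\{w_1,\ldots,w_\omega\}$ and partition $V(G)\setminus\Omega$ into the set $B_0$ of vertices with no neighbour in $\Omega$ and the sets $B_i$ of vertices whose unique neighbour in $\Omega$ is $w_i$. The key leverage is a repeated application of $(P_2+P_4)$-freeness: since $\omega\geq 3$, for each $i$ the set $B_i\cup B_0$ is anticomplete to some edge $w_jw_k$ of $\Omega$, so $G[B_i\cup B_0]$ contains no induced $P_4$ and is therefore perfect and colourable with at most $\omega$ colours while avoiding the colour of $w_i$. The aim is then to assemble these cograph pieces into a global colouring: when $\omega$ is large the interaction between the parts should be rigid enough to stitch them into a proper $\omega$-colouring and conclude $\chi(G)=\omega$, while for small $\omega$ the copies of $C_5$ and the cliques are confined to a region of bounded clique number that can be absorbed into at most $6$ colours.

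I expect the assembly step in $(ii)$ to be the main obstacle. The difficulty is that $G[V(G)\setminus\Omega]$ may itself contain a $C_5$ and so need not be perfect, and the cross-edges between distinct parts $B_i$ must be controlled simultaneously with the constraints forcing each $B_i$ to avoid the colour of $w_i$. Overcoming this will likely require a finer partition of the $B_i$ according to their attachment type to a fixed $C_5$, an induction on $|V(G)|$ applied to the smaller graph $G[V(G)\setminus\Omega]$, and a careful split between the large-$\omega$ regime, where the target is $\chi(G)=\omega$, and the small-$\omega$ regime, where the target is $\chi(G)\leq 6$.
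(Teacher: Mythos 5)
Your reduction via the SPGT is correct and matches the paper: odd holes $C_{2k+1}$ with $k\geq 4$ contain $P_2+P_4$, odd antiholes $\overline{C_{2p+1}}$ with $p\geq 3$ contain $K_4-e$, so a non-perfect graph in the class contains a $C_5$ or a $C_7$; your sketch for part $(iii)$ (classify attachment types to the $C_7$ and reuse a proper $3$-colouring) is also essentially the paper's route, via its sets $Q_i, X_i, Y_i, L_i, M$, although the intermediate claim that you would ``first prove $\omega(G)=3$'' is neither needed nor obviously easier than directly exhibiting the $3$-colourings, and a $K_4$ is \emph{not} excluded by $(K_4-e)$-freeness alone, so that step would require its own argument.

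The genuine gap is in part $(ii)$, which is where virtually all of the work in this theorem lies, and your proposal leaves precisely that part unproved. Your observations about a maximum clique $\Omega$ are individually correct ($K_4-e$-freeness gives each outside vertex at most one neighbour in $\Omega$, and each piece $B_i\cup B_0$ is a cograph since it is anticomplete to an edge of $\Omega$ when $\omega\geq 3$), but they do not converge to a colouring: the pieces $B_i\cup B_0$ pairwise overlap in $B_0$, the cross-edges between distinct $B_i$ and $B_j$ are completely unconstrained by what you have established, the graph $G-\Omega$ may itself contain a $C_5$ so no induction on perfection is available, and nothing in the plan produces the specific constant $6$ --- you yourself flag the ``assembly step'' as the main obstacle and offer only a list of hoped-for techniques. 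The paper's proof takes a structurally different anchor: it fixes the $C_5$ itself, partitions $V(G)$ into the attachment classes $A, B, D, Z, T$, proves the properties $(\mathbb{O}1)$--$(\mathbb{O}7)$, and then runs a hierarchy of four configuration lemmas (\cref{F1,F2,F3,F5}, on the graphs $F_1,\dots,F_4$) culminating in \cref{C5}. In each configuration the bound $\chi(G)\leq 6$ comes from an explicit partition of $V(G)$ into six stable sets built from these classes, and in the cases where a large clique arises inside some $B_i$ (forcing $\omega(G)\geq 5$), the conclusion $\chi(G)=\omega(G)$ comes from splitting $V(G)$ into a perfect part $W_1$ with $\chi(G[W_1])\leq\omega(G)-2$, a bipartite part $W_2$ anticomplete to $W_1$, and a remainder covered by two further stable sets. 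None of this machinery, or any substitute for it, is present in your proposal, so as written it establishes only part $(i)$ and a plausible outline of part $(iii)$, not the theorem.
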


\begin{theorem}\label{mainthm}
If $G$ is a  ($P_2+P_4$, $K_4-e$)-free graph with $\omega(G)\geq 3$, then $\chi(G)\leq \max\{6, \omega(G)\}$. Moreover the bound is tight for each $\omega(G)\notin \{4,5\}$.
\end{theorem}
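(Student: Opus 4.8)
The plan is to prove the upper bound $\chi(G)\le\max\{6,\omega(G)\}$ and then exhibit tight examples.
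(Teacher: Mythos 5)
Your proposal is not a proof: ``prove the upper bound and then exhibit tight examples'' merely restates what any proof of the theorem must accomplish, and supplies none of the mathematical content. Concretely, two substantial pieces are missing. First, for the upper bound you need some reduction that makes the problem tractable; the paper's route is to observe that every odd hole $C_{2t+1}$ with $t\geq 4$ contains a $P_2+P_4$ and every odd antihole $\overline{C_{2p+1}}$ with $p\geq 3$ contains a $K_4-e$, so by the Strong Perfect Graph Theorem a non-perfect graph in this class must contain a $C_5$ or a $C_7$. This reduces the theorem to two coloring statements: if $G$ contains a $C_5$ then $\chi(G)\leq 6$ or $\chi(G)=\omega(G)$, and if $G$ is $C_5$-free and contains a $C_7$ then $\chi(G)\leq 3$. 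Proving these is where essentially all the work lies: the paper partitions $V(G)$ relative to the 5-cycle into sets $A_i, B_i, D_i, Z_i, T$ governed by a list of structural properties forced by $(P_2+P_4)$- and $(K_4-e)$-freeness, then runs a cascade of lemmas on auxiliary configurations $F_1,\dots,F_4$, in each case either exhibiting an explicit partition into six stable sets or splitting off a perfect piece plus a bipartite piece to get $\chi(G)=\omega(G)$; the $C_7$ case needs a separate partition and yields a $3$-coloring. Nothing in your proposal indicates how you would obtain any bound at all, let alone $\max\{6,\omega(G)\}$.

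Second, the tightness claim is not automatic and requires specific examples: for $\omega(G)\geq 6$ the complete graphs suffice, but for $\omega(G)=3$ one needs a $(P_2+P_4,K_4-e)$-free graph with chromatic number exactly $6$ --- the paper uses the complement of the Schl\"afli graph on $27$ vertices --- and this is precisely why the theorem excludes $\omega(G)\in\{4,5\}$, where no matching examples are known. Your proposal would need to name such witnesses, or the ``moreover'' clause remains unproved. As it stands, the proposal is a table of contents, not an argument.
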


\noindent{\bf Proof of \cref{mainthm} assuming \cref{main-c5c7}}.~Since an odd hole $C_{2t+1}$  where $t\geq 4$  contains a $P_2+P_4$, and since an odd antihole $\overline{C_{2p+1}}$  where $p\geq 3$  contains a $K_4-e$, the proof of the first assertion of \cref{mainthm} follows as a direct consequence of SPGT \cite{spgt} and  from   \cref{main-c5c7}. The proof of the second assertion of \cref{mainthm} is  immediate  since the complement graph of the 16-regular Schl\"afli graph on 27 vertices   is a ($P_2+P_4$, $K_4-e$)-free graph $G$ with $\omega(G)= 3$ and $\chi(G)=6$, and since the complete graph on at least 6 vertices  is   a ($P_2+P_4$, $K_4-e$)-free graph $G$ with $\chi(G)=\omega(G)\geq 6$. \hfill{$\Box$}

\medskip

 Moreover, Chen and Zhang \cite{CW} showed that if $G$ is ($P_2+P_4, K_4-e, C_5$)-free with $\omega(G)\geq 5$, then $G$ is perfect.  Next as an immediate  consequence  of \cref{main-c5c7}, we show that
 the lower bound on $\omega$ can be improved.

 \begin{cor}\label{cor-per}
 If $G$ is a ($P_2+P_4$, $K_4 -e$, $C_5$)-free graph with $\omega(G)\geq 4$, then $G$ is perfect.
 \end{cor}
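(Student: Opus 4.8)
The plan is to derive Corollary~\ref{cor-per} directly from \cref{main-c5c7} by ruling out the two non-perfect outcomes of that theorem under the stronger hypotheses. Suppose $G$ is $(P_2+P_4, K_4-e, C_5)$-free with $\omega(G)\geq 4$. Since $G$ is $C_5$-free by hypothesis, outcome $(ii)$ of \cref{main-c5c7} cannot be invoked through its ``$G$ contains a $C_5$'' premise, and outcome $(iii)$ likewise requires $G$ to contain a $C_7$. So the entire argument reduces to showing that a $C_5$-free graph in this class with $\omega(G)\geq 4$ contains no $C_7$ either; once both $(ii)$ and $(iii)$ are excluded, \cref{main-c5c7} forces outcome $(i)$, namely that $G$ is perfect.

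Accordingly, the heart of the proof is the claim that a $(P_2+P_4, K_4-e)$-free graph with $\omega(G)\geq 4$ cannot contain an induced $C_7$. First I would suppose for contradiction that $G$ contains an induced cycle $C = v_1v_2\cdots v_7v_1$. Since $\omega(G)\geq 4$, fix a clique $K$ of size $4$ in $G$; because $G$ is $(K_4-e)$-free, every vertex of $G$ outside $K$ has at most two neighbours in $K$ (otherwise three neighbours together with a fourth vertex of $K$ would create a $K_4-e$). The aim is to analyse how the seven vertices of $C$ attach to $K$ and to each other, and to produce a forbidden $P_2+P_4$ or $K_4-e$. The $C_7$ itself contains many induced $P_4$'s and $P_2$'s, so the $(P_2+P_4)$-freeness is a strong constraint: for any edge $xy$ of $G$ with both endpoints non-adjacent to a chosen induced $P_4$ of $C$, one immediately gets a $P_2+P_4$.

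The key step I expect to carry the argument is a neighbourhood analysis of the clique $K$ relative to the cycle. Since each vertex of $C$ has at most two neighbours in $K$, and $C$ has seven vertices while $K$ has four, a counting/pigeonhole argument should locate two consecutive vertices of $C$, say $v_i, v_{i+1}$, that are both anticomplete (or nearly so) to a pair of vertices of $K$; the edge $v_iv_{i+1}$ then plays the role of the $P_2$, and an induced $P_4$ drawn from $K$ (or from the remainder of $C$) completes a forbidden $P_2+P_4$. Alternatively, whenever two vertices of $C$ have a common neighbour in $K$, one must check that this does not create a $K_4-e$ with the cycle edges; tracking these adjacencies carefully should leave no consistent configuration. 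I would organise this as a short case analysis on the number of $C$-vertices adjacent to each vertex of $K$.

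The main obstacle will be handling the cases where the vertices of $K$ have relatively many neighbours on $C$, so that the easy ``anticomplete edge plus distant $P_4$'' argument for a $P_2+P_4$ is not immediately available; in those dense cases one must instead extract the contradiction from the $(K_4-e)$-freeness by exhibiting two adjacent cycle vertices sharing two common neighbours, or two clique vertices sharing a cycle neighbour in a way that forces a $K_4-e$. I expect that the parity and length of $C_7$ (odd, length $7$) make a clean pigeonhole work: with seven cycle vertices distributed among four clique vertices under the ``at most two neighbours'' cap, some clique vertex sees at most one cycle vertex, and its two clique-partners then supply an induced $P_4$ anticomplete to a cycle edge. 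Once the $C_7$-freeness is established, the corollary follows immediately, since the only surviving alternative in \cref{main-c5c7} is perfection.
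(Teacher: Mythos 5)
There is a genuine gap, and it is a strategic one: the entire ``heart'' of your proposal --- ruling out an induced $C_7$ by a from-scratch pigeonhole analysis of a $K_4$ against the cycle --- is never actually carried out (it is framed throughout with ``I expect'' and ``should''), and, more importantly, it is unnecessary. You overlooked that outcome $(iii)$ of \cref{main-c5c7} does not need to be \emph{excluded} structurally; it can simply be \emph{invoked}. The paper's proof is one line: if $G$ is not perfect, then by the Strong Perfect Graph Theorem, together with the facts that every odd hole $C_{2t+1}$ with $t\geq 4$ contains a $P_2+P_4$ and every odd antihole $\overline{C_{2p+1}}$ with $p\geq 3$ contains a $K_4-e$ (and $\overline{C_5}=C_5$ is forbidden by hypothesis), $G$ must contain a $C_7$; then \cref{main-c5c7}:$(iii)$ yields $\chi(G)\leq 3$, contradicting $\chi(G)\geq \omega(G)\geq 4$. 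In other words, the conclusion $\chi(G)\leq 3$ of outcome $(iii)$ already forces $\omega(G)\leq 3$ for any $C_5$-free graph in the class containing a $C_7$, so the $C_7$-exclusion you set out to prove is an immediate corollary of the cited theorem, not a new lemma to be established.

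Beyond being unexecuted, your sketch has concrete defects. You propose to complete a forbidden $P_2+P_4$ using ``an induced $P_4$ drawn from $K$,'' but $K$ is a clique and contains no induced $P_4$ at all, so the pigeonhole step ``some clique vertex sees at most one cycle vertex, and its two clique-partners then supply an induced $P_4$'' collapses; any $P_4$ must mix cycle and clique vertices, which reopens exactly the dense-attachment cases you acknowledge as the main obstacle. You also quietly strengthen the claim by dropping $C_5$-freeness (``a $(P_2+P_4,K_4-e)$-free graph with $\omega(G)\geq 4$ cannot contain an induced $C_7$''), even though $C_5$-freeness is available and is used repeatedly in the paper's own $C_7$ analysis (\cref{C7} kills configurations by exhibiting $C_5$'s at many points); proving the stronger statement is harder and not needed. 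Finally, your closing inference ``once $(ii)$ and $(iii)$ are excluded, \cref{main-c5c7} forces outcome $(i)$'' tacitly requires the SPGT-based reading of the theorem (not perfect $\Rightarrow$ $G$ contains a $C_5$ or a $C_7$); that step should be stated explicitly rather than extracted from the theorem's disjunction of conditionals, which, read literally, could be satisfied vacuously.
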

\noindent{\bf Proof of \cref{cor-per} assuming \cref{main-c5c7}}.~Let $G$ be a ($P_2+P_4$, $K_4 -e$, $C_5$)-free graph with $\omega(G)\geq 4$. Suppose to the contrary that $G$ is not perfect.
 Then by SPGT, $G$ contains a $C_7$, and hence from \cref{main-c5c7}:$(iii)$, we have $\omega(G)\leq \chi(G)\leq 3$ which is a contradiction.
 This proves \cref{cor-per}. \hfill{$\Box$}

\medskip
The assumption that the clique number $\omega\geq 4$ in \cref{cor-per} cannot be dropped and that the constant 4 cannnot be lowered. For instance, if  $G^*$ is the graph that consists of a $C_7$  with the vertex-set $\{v_1,v_2,\ldots, v_7\}$ and the edge-set $\{v_1v_2,v_2v_3,\ldots, v_6v_7,v_7v_1\}$ plus a vertex $x$ which is adjacent to $\{v_1,v_2, v_4, v_6\}$ and is nonadjacent to $\{v_3,v_5,v_7\}$, then $G^*$ is a ($P_2+P_4$, $K_4 -e$, $C_5$)-free graph with $\omega(G^*)=3$ which is not perfect.  Moreover, the assumption that the graph is $C_5$-free in \cref{cor-per} cannot be dropped. For instance, if  $G'$ is the graph that consists of the graph $K_{t}$ ($t\geq 4$) which shares an edge with the $C_5$ so that $|V(G')|=t+3$, then $G'$ is a ($P_2+P_4$, $K_4 -e$)-free graph that contains a $C_5$ with $\omega(G')=t$ and is not perfect. However, we prove the following theorem when $\omega=4$.

\begin{theorem}\label{w4}
Let $G$ be a ($P_2+P_4$, $K_4 -e$)-free graph with $\omega(G) = 4$. Then $\chi(G) =4$.
\end{theorem}
\begin{proof}Let $G$ be a ($P_2+P_4$, $K_4 -e$)-free graph with $\omega(G) = 4$. We may assume that $G$ is connected. If $G$ is a good graph,  then we prove the theorem
by using induction on $|V(G)|$.  If $G$ has a pair of comparable vertices, say $u$ and
	$v$, such that $N(u) \subseteq N(v)$ (say), then  we can take any $4$-coloring of $G-\{u\}$ and extend it to a $4$-coloring of $G$, using the color of $v$ for the vertex $u$.
If $G$ has a vertex, say $v$, such that $deg(v) \leq 3$, then   we can take any $4$-coloring of $G -\{v\}$ and extend it to a $4$-coloring of $G$, using for $v$ a color (possibly new) that does not appear in $N(v)$, and we are done. So we may assume that $G$ is bad, and  then the theorem follows from \cref{mainthm-2} given below.
\end{proof}

\begin{theorem}\label{mainthm-2}
 Let $G$ be a bad  ($P_2+P_4$, $K_4-e$)-free graph with $\omega(G) = 4$. Then $\chi(G) = 4$.
\end{theorem}

%The proof of \cref{mainthm} is based on the following theorem (and its proof  is given in \cref{sec-main}).

Furthermore, since the  class of ($P_7, K_4-e$)-free graphs includes the class of ($P_2+P_4$, $K_4-e$)-free graphs, we observe that  \cref{w2,mainthm,w4} altogether partially answers the following problem of Ingo Schiermeyer (unpublished).
\begin{problem}
Is it true that, every ($P_7, K_4-e$)-free graph $G$ satisfies
$\chi(G) \leq \omega(G) + \varepsilon$, where $\varepsilon\geq 3$  is a positive integer?
\end{problem}

In \cref{genprop-1}, we give some general properties of   ($P_2+P_4, K_4-e$)-free graphs  that  contain a  $C_5$, and the rest of the paper is devoted to the proofs of \cref{main-c5c7,mainthm-2} (and they are given in \cref{sec-main,sec:w4} respectively). We finish this section with some more preliminaries which are used in this paper. For a graph $G$, a vertex $v\in V(G)$  is \emph{complete} (resp. \emph{anticomplete}) to $X\subseteq V(G)\setminus \{v\}$ if  $v$ is adjacent (resp. nonadjacent) to every vertex in $X$.   %For any two vertex-subsets
%$X$ and $Y$ of a graph $G$,  we say that $X$ is \emph{complete} (resp. \emph{anticomplete}) to $Y$   if each vertex in $X$ is adjacent (resp. nonadjacent) to every vertex in $Y$.
For any two vertex-subsets
$S$ and $T$ of $G$, we denote by $[S,T]$, the set of edges which has
one end in $S$ and the other   in $T$.  An edge-set   $[S,T]$ is \emph{complete} (or $S$ is \emph{complete} to $T$)   if each vertex in $S$
is complete to $T$. An edge-set   $[S,T]$ is \emph{anticomplete} (or $S$ is \emph{anticomplete} to $T$)   if each vertex in   $S$
is  anticomplete to $T$.
For a positive integer $k$, we write $\langle k \rangle$ to denote the set $\{1, 2, \ldots , k\}$, and we write $i\in \langle k \rangle$ if $i\in \{1, 2, \ldots , k\}$ and $i$ mod $k$.

\medskip
We will use the following well-known results often.

\vspace{-0.25cm}

\begin{enumerate}[label=  ($\mathbb{R}$\arabic*), leftmargin=1.05cm] \itemsep=0pt
\item\label{p3-free} {\it If a graph is $P_3$-free, then it can be expressed as a disjoint union of complete graphs.}
\item\label{p4-free} {\it Every $P_4$-free graph is perfect.}
\end{enumerate}

We will also use the following  lemma whose proof is obvious.

\begin{lemma}\label{obs1}
Let $G$ be a  ($K_4-e, K_5$)-free graph. Suppose that $G$ contains a $K_3$, say $K$. Then: (i) Any vertex in
$V(G)\setminus V(K)$ is either complete to $K$ or it is adjacent to at most one vertex in $K$. (ii) No two vertices in $V(G)\setminus V(K)$ is complete to $K$.
\end{lemma}

 \section{Structural properties of  ($P_2+P_4, K_4-e$)-free graphs}\label{genprop-1}
In this section,  we prove some  important and useful  structural properties  of a ($P_2+P_4$,\,$K_4-e$)-free graph  that  contains a $C_5$.

Let $G$ be a ($P_2+P_4, K_4-e$)-free graph.
Suppose that $G$ contains a  $C_5$, say with
 vertex-set $C:= \{v_1, v_2, v_3, v_4, v_5\}$ and edge-set $\{v_1 v_2, v_2 v_3, v_3 v_4, v_4 v_5,v_5 v_1\}$.  Then since $G$ is ($K_4-e$)-free, we immediately see that for any $v\in V(G)\setminus C$, there does not exist an index $i \in \langle  5 \rangle$ such that $\{v_{i}, v_{i+1}, v_{i+2}\} \subseteq N(v)\cap C$. Using this fact, we   define the following sets. For $i \in \langle  5 \rangle$, we let:
\begin{center}
 \begin{tabular}{ccl}
 $A_{i}$ &:=  & $\{ v \in V(G)\setminus C \mid N(v) \cap  C  = \{v_i\}\},$\\
 $B_{i}$ &:=  & $\{v \in V(G)\setminus C \mid  N(v) \cap C = \{v_i, v_{i+1}\}\},$\\
 $D_{i}$ &:=  & $\{ v \in V(G)\setminus C \mid N(v) \cap C = \{v_{i-1}, v_{i+1}\}\},$\\
 $Z_{i}$ &:= & $\{ v \in V(G)\setminus C \mid N(v) \cap C = \{v_{i-2}, v_i, v_{i+2}\}\},$ and\\
  $T $ &:=  & $\{ v \in V(G)\setminus C \mid N(v) \cap C = \es\}$.\\
 \end{tabular}
 \end{center}
 Further we let $A:= \cup_{i=1}^5A_i$,   $B:= \cup_{i=1}^5B_i$, $D:= \cup_{i=1}^5D_i$ and $Z:= \cup_{i=1}^5Z_i$ so that $V(G)= C\cup A\cup B\cup D\cup Z\cup T$. Moreover  for each $i\in \langle  5 \rangle$, the following hold: (We note that the properties \ref{DZ}, \ref{x} and \ref{xto} and their proofs are  explicitly available in \cite{AAK}  and implicitly in several other papers,   we give them here for completeness.)

\begin{enumerate}[label=  $(\mathbb{O}\arabic*)$, leftmargin=1.5cm] \itemsep=0pt
\item \label{DZ} {\it $D_{i}$ is a stable set, and $|Z_{i}|\leq 1$.}
 \item \label{x}  {\it $B_i\cup Z_{i-2}$ is a clique. Moreover, each vertex in $B_i\cup Z_{i-2}$ has at most one neighbor in  $B_{i+2}\cup Z_{i}$, and has at most one neighbor in  $B_{i-2}\cup Z_{i+1}$.}
\item  \label{xto}
 {\it $[B_i\cup Z_{i-2},~ A_{i}\cup A_{i+1}\cup B_{i-1}\cup B_{i+1}\cup (D\sm D_{i-2})\cup (Z\sm Z_{i-2})]$ is an empty set.}

\item \label{adt}    {\it     $A_i\cup T$ is a  stable set.}
\item \label{ab}   {\it  If $A_i\neq \emptyset$, then  $|B_{i+1}| \leq 2$ and $|B_{i-2}|\leq 2$.}
\item \label{aa1}{\it        $[A_i, A_{i+1} \cup A_{i-1}]$ is either complete or an empty set.}
 \item \label{b}  {\it If $B_i\neq \es$, then the following hold:
        \begin{enumerate}[label=  $(\roman*)$]
            \item\label{bb} $|B_{i-1}| \leq 1$ and $|B_{i+1}| \leq 1$.
              \item\label{bzd}Either $B_{i-1}$ is an empty set or $ D_{i-1} \cup D_{i+1}\cup Z_i$ is an empty set.
            \item\label{b2d} Either $|B_{i}|=1$ or $D_i \cup D_{i+1}$ is an empty set.
            \item\label{da} $[ A_i \cup D_{i+1}, D_{i-1}]$ and $[A_i, D_i \cup A_{i+1}]$ are empty sets. Likewise, $[A_{i+1} \cup D_i, D_{i+2}]$ and $[A_{i+1}, D_{i+1}]$ are empty sets.
\item\label{tz} Either $|B_i| \leq 2$ or $[T, Z\setminus Z_{i-2}]$ is an empty set.
         \end{enumerate}
}
  \end{enumerate}

\begin{proof}
\noindent{\ref{DZ}}:~If there are  adjacent vertices  in $D_i$, say $d$ and $d'$, then $\{v_{i-1}, d, d', v_{i+1}\}$ induces a $K_4-e$; so $D_i$ is a stable set.
Next suppose to the contrary that there are vertices, say $z$ and $z'$ in $Z_i$. Now if  $z$ and $z'$ are adjacent, then $\{v_{i}, z, z', v_{i+2}\}$ induces a  $K_4-e$, and if $z$ and $z'$   are nonadjacent, then $\{z, v_{i-2}, v_{i+2},z'\}$ induces a $K_4-e$.  These contradictions show that $|Z_i| \leq 1$. So \ref{DZ} holds.  $\sq$

\medskip
\noindent{\ref{x}:}~If there are nonadjacent vertices in $B_i\cup Z_{i-2}$, say $u$ and $v$ , then $\{u, v_i, v_{i+1}, v\}$ induces a $K_4-e$; so $B_i\cup Z_{i-2}$ is a clique. Now  if there is a vertex in $B_i\cup Z_{i-2}$, say $x$, which has two neighbors in $B_{i+2}\cup Z_{i}$, say $u$ and $v$, then by the first assertion, $\{x,u,v,v_{i+2}\}$ induces a $K_4-e$; so each vertex in $B_i\cup Z_{i-2}$ has at most one neighbor in  $B_{i+2}\cup Z_{i}$.  Likewise, each vertex in $B_i\cup Z_{i-2}$ has at most one neighbor in  $B_{i-2}\cup Z_{i+1}$.   This proves \ref{x}.  $\sq$

\medskip
\noindent{\ref{xto}:}~If  there are   vertices, say $u\in B_i\cup Z_{i-2}$ and $v\in A_{i}\cup A_{i+1}\cup B_{i-1}\cup B_{i+1}\cup (D\sm D_{i-2})\cup (Z\sm Z_{i-2})$ such that $uv\in E(G)$, then \{$v_{i}$, $ v_{i+1}$, $u, v$\} induces a $K_4-e$. So \ref{xto} holds. $\sq$

\medskip
\noindent{\ref{adt}}:~If there are adjacent vertices in $A_i \cup T$, say $u$ and $v$, then $\{u, v, v_{i+1}, v_{i+2}, v_{i+3}, v_{i+4}\}$ induces a $P_2+P_4$. So \ref{adt} holds. $\sq$

\medskip
\noindent{\ref{ab}}:~Let $a\in A_i$. Suppose to the contrary that there are three vertices, say $b, b'$ and $b''$, in $B_{i+1}$. Recall   from \ref{x}  that $\{b,b',b''\}$ is a clique. Then since $\{a,b,b',b'', v_{i+2}\}$ does not induce a $K_4-e$, clearly $a$ is nonadjacent to at least two vertices in $\{b,b',b''\}$, say $b$ and $b'$. But then $\{b, b', a,v_i, v_{i-1}, v_{i-2}\}$ induces a $P_2+P_4$ which is a contradiction. So $|B_{i+1}| \leq 2$. Likewise, $|B_{i-2}|\leq 2$. This proves \ref{ab}. $\sq$

\medskip
\noindent{\ref{aa1}}:~If there are vertices, say $u, u' \in A_i$ and $v\in A_{i+1}$ such that $uv \in E(G)$ and $u'v\not\in E(G)$,  then  $\{v_{i+2}, v_{i-2}, u', v_i, u, v\}$ induces a $P_2+P_4$, by \ref{adt}. So \ref{aa1} holds.  $\sq$

\medskip
\noindent{\ref{b}}:~\ref{bb}: If there are vertices, say $u,v \in B_{i-1}$, then for any $b\in B_i$, from \ref{x} and \ref{xto}, we have $uv \in E(G)$ and $bu, bv \not\in E(G)$,  and then $\{u,v, b,v_{i+1}, v_{i+2}, v_{i-2}\}$ induces a $P_2+P_4$. Hence $|B_{i-1}| \leq 1$. Likewise, $|B_{i+1}| \leq 1$. So \ref{bb} holds. $\diamond$

\medskip
\noindent{\ref{bzd}}:~If  there are vertices, say $u\in B_{i-1}$  and $v \in Z_i\cup D_{i-1}\cup D_{i+1}$, then for any $b\in B_i$, from \ref{xto}, we have $uv,bu,bv \notin E(G)$, and then   $\{b, v_{i+1}, u, v_{i-1}, v_{i-2}, v\}$ or $\{u, v_{i-1}, b, v_{i+1}, v_{i+2},v\}$ induces a $P_2+P_4$. So \ref{bzd} holds. $\diamond$

\medskip
\noindent{\ref{b2d}}:~If  there are vertices, say $b, b'\in B_i$  and $d \in D_i\cup D_{i+1}$, then $\{b, b',   v_3, v_4, v_5, d\}$ induces a $P_2+P_4$, by \ref{x} and \ref{xto}. So \ref{b2d} holds. $\diamond$

\medskip
\noindent{\ref{da}}:~Let $b\in B_i$. If there are adjacent vertices, say $u\in D_{i-1}$ and $v\in A_i \cup D_{i+1}$, then \{$b$, $v_{i+1}$, $v$, $u$, $v_{i-2}$, $v_{i-1}$\} induces a  $P_2+P_4$,  by \ref{xto}. Also, if there are adjacent vertices, say $u\in D_{i} \cup A_{i+1}$ and $v\in A_i$, then \{$v_{i+2}$, $v_{i-2}$, $u$, $v$, $v_i$, $b$\} induces a $P_2+P_4$,   by \ref{xto}. So \ref{da} holds. $\diamond$

\medskip
\noindent{\ref{tz}}:~Suppose to the contrary that there are three vertices in $B_i$, say $u,v$ and $w$, and that there are adjacent vertices, say   $t\in T$ and $z\in Z \sm Z_{i-2}$. Then from \ref{x}, since $\{v_i, u,v,w,t\}$ does not induce a $K_4 - e$, we may assume that $tu,tv\notin E(G)$, and then from \ref{xto}, we see that $\{u,v,t, z, v_{i-1}, v_{i-2}\}$ or $\{u,v,t,z,v_{i-2},v_{i+2}\}$ induces  a $P_2 + P_4$ which is a contradiction.  So \ref{tz} holds.
  \end{proof}

\section{Proof of \cref{main-c5c7}} \label{sec-main}
In this section, we give a proof of \cref{main-c5c7}, and the proof indeed follows from a sequence of lemmas which are based on some special graphs $F_1,F_2,F_3$ and $F_4$ (see Figure~\ref{Gi}), and it is given at the end of this section.

 \begin{figure}[h]
\centering
 \includegraphics[height=2.75cm]{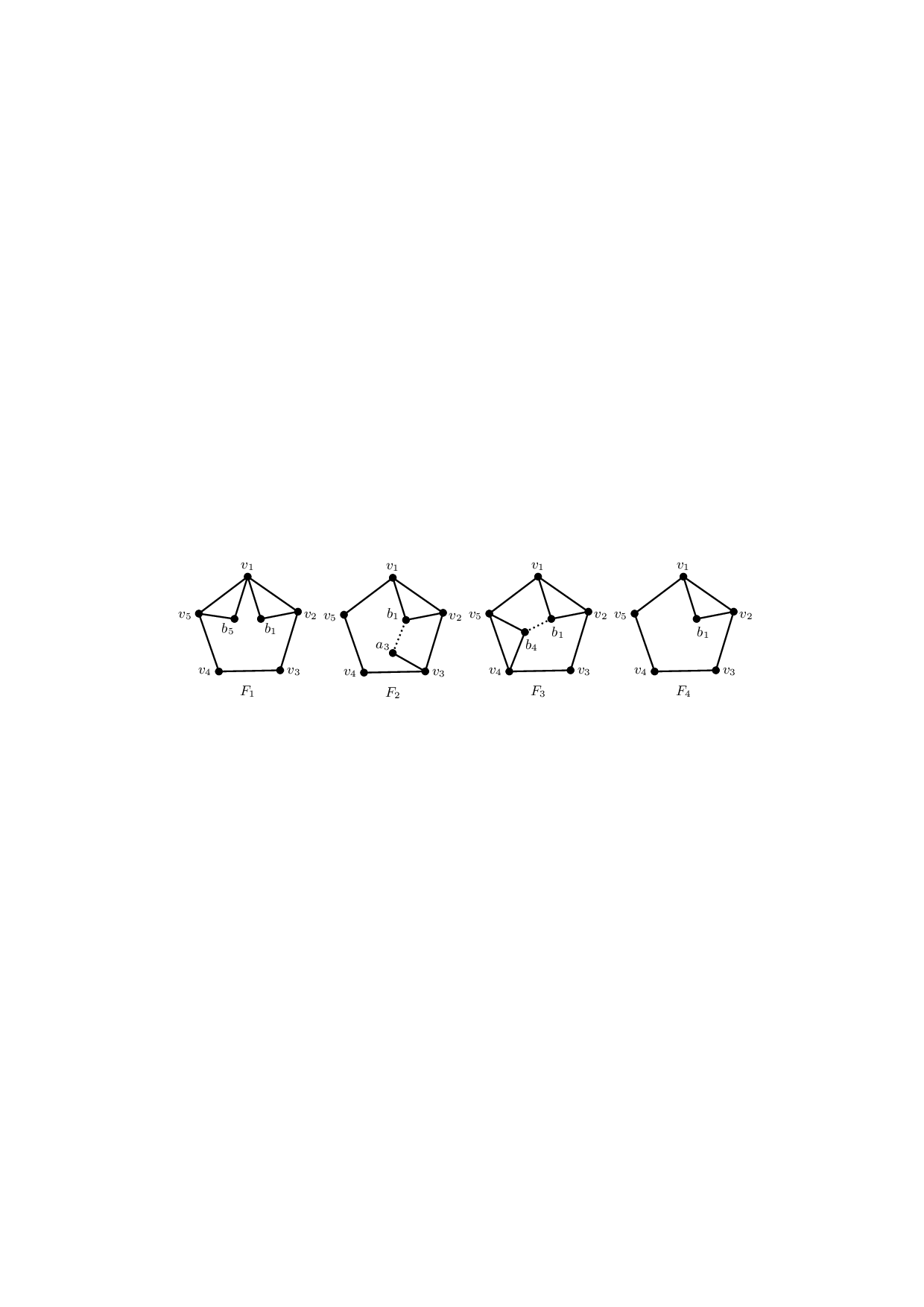}
\caption{Some specific graphs. Here  a dotted line  between two vertices indicates that such vertices may or may not be adjacent.  }\label{Gi}
\end{figure}

\begin{lemma}\label{F1}
    Let $G$ be a ($P_2+P_4, K_4-e$)-free graph. If $G$ contains an $F_1$, then $\chi(G)\leq 6$.
\end{lemma}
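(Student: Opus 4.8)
The plan is to fix an induced copy of $F_1$ in $G$ and read off from it the distinguished $5$-cycle $C=\{v_1,\dots,v_5\}$ on which $F_1$ is built, so that the whole machinery of \cref{genprop-1} becomes available: the partition $V(G)=C\cup A\cup B\cup D\cup Z\cup T$ together with the structural properties \ref{DZ}--\ref{b}. The remaining vertices of $F_1$ (the ones off the cycle, some joined by the optional dotted edges of Figure~\ref{Gi}) serve exactly to certify that certain of the sets $B_i$, $D_i$, $Z_i$ are nonempty; identifying which ones is the first step, because each such nonemptiness selects one branch of the relevant dichotomy. Alongside this I would keep in play the standing facts that, since $G$ is $(K_4-e)$-free, every neighbourhood induces a disjoint union of cliques, that each $B_i\cup Z_{i-2}$ is a clique (\ref{x}), and that each $D_i$ and each $A_i\cup T$ is stable (\ref{DZ}, \ref{adt}); in particular $T$ itself is a stable set and is anticomplete to every $A_i$.

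Next I would cascade these branches into hard numerical bounds. The $F_1$-pattern forces some $B_i$ to coexist with an adjacent $D$- or $Z$-vertex, and then \ref{bzd}, \ref{b2d} and \ref{da} annihilate several neighbouring sets while \ref{ab} and \ref{bb} cap the surviving $B$-sets at one or two vertices; \ref{aa1} and \ref{xto} force the cross-edges between the remaining pieces to be either complete or empty. The target of this bookkeeping is to present $V(G)$ as the cycle $C$, the five stable sets $A_i$, the five stable sets $D_i$, at most five singletons $Z_i$, a family of $B$-cliques of bounded size, and the stable set $T$, linked by a rigid complete-or-empty adjacency pattern.

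The decisive and most delicate step is to control $T$, which is the only part not seen from $C$. Its internal edges are already forbidden by \ref{adt}, so only its edges into $B\cup D\cup Z$ remain; here \ref{tz} is the key lever, forcing $T$ to be anticomplete to $Z\setminus Z_{i-2}$ once a relevant $B_i$ has three vertices, and the $(P_2+P_4)$-freeness pins down the few attachments that survive. I expect this $T$--$Z$--$B$ interaction, together with the fusion of all the complete-or-empty adjacencies into one consistent colouring, to be the main obstacle. Once $T$ is tamed I would colour explicitly: spend the $3$ colours that the odd cycle $C$ demands, reuse them across the stable sets $A_i$, $D_i$ and $T$ wherever the rigid adjacency pattern allows, and pay the remaining budget of at most three colours on the bounded $B$-cliques and the singletons $Z_i$. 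A direct count should then show that six colours always suffice, which is exactly the bound one expects, since the Schl\"afli-complement (with $\omega=3$ and $\chi=6$) is the extremal graph this lemma must accommodate.
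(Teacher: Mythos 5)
Your overall strategy---anchor on the $C_5$ inside the $F_1$, invoke the partition and the properties of \cref{genprop-1}, and exhibit six explicit stable sets---is exactly the paper's route, but what you have written is a roadmap rather than a proof, and the two concrete commitments you do make point in the wrong direction. First, you never pin down what $F_1$ actually forces, and your guess is incorrect: $F_1$ is the $C_5$ together with vertices $b_1\in B_1$ and $b_5\in B_5$ (no $D$- or $Z$-vertex of the partition occurs in it), so the working hypothesis is that two \emph{consecutive} $B$-sets are nonempty, not that ``some $B_i$ coexists with an adjacent $D$- or $Z$-vertex.'' From this hypothesis the paper deduces, via \ref{b}, that $|B_i|\leq 1$ for $i\in\{1,2,4,5\}$, that $D_2\cup D_5\cup Z_1=\emptyset$, and that $[A_1,D_1]$, $[A_2,D_3]$ and $[A_5,D_4]$ are empty; a short ad hoc argument (pick $b_3\in B_3$ nonadjacent to both $b_1$ and $b_5$ using \ref{x}, whence $\{b_1,v_2,b_5,v_5,v_4,b_3\}$ induces a $P_2+P_4$) gives $|B_3|\leq 2$. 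None of these deductions appears in your proposal, and without them your closing ``direct count'' cannot be carried out: there are five $A_i$'s, five $D_i$'s, up to five $Z_i$-singletons, several $B$-cliques and $T$, and six colours suffice only because of a careful interleaving such as $S_1=\{v_3\}\cup A_1\cup B_1\cup B_5\cup D_1$ and $S_6=(Z\setminus Z_1)\cup(B_3\setminus B_3')$, the latter stable because by \ref{xto} the $Z_j$'s are pairwise anticomplete and $[B_3\cup Z_1,\,Z\setminus Z_1]=\emptyset$.

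Second, you mislocate the difficulty. You call taming $T$ ``the decisive and most delicate step'' and name \ref{b}:\ref{tz} as the key lever, but \ref{b}:\ref{tz} is vacuous here: its hypothesis requires $|B_i|\geq 3$, while in this lemma every $B_i$ has size at most $2$. In fact $T$ is dispatched trivially---since $T$ has no neighbours on $C$ and $A_3\cup T$ is stable by \ref{adt}, the paper simply sets $S_3=\{v_4\}\cup A_3\cup T$, so any edges from $T$ into $B\cup D\cup Z$ are irrelevant to that colour class. (Property \ref{b}:\ref{tz} does become essential later, in the proofs of \cref{F2,F3}, where $|B_i|\geq 3$ forces $\omega(G)\geq 5$ and the conclusion degrades to $\chi(G)=\omega(G)$; perhaps that is what you were anticipating, but for \cref{F1} the bound $\chi(G)\leq 6$ is unconditional and comes straight from the six stable sets.) So the gap is concrete: identify the two $B$-vertices constituting $F_1$, derive the emptiness and size bounds listed above, and then write down and verify the six stable sets via \ref{DZ}, \ref{xto} and \ref{adt}; as it stands your sketch neither determines the forced structure nor closes the count.
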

\begin{proof}
     Let $G$ be a ($P_2+P_4, K_4-e$)-free graph. Suppose that $G$ contains an $F_1$. We label such an $F_1$ as shown in Figure \ref{Gi}, and we let $C := \{v_1, v_2, v_3, v_4, v_5\}$. Then with respect to $C$, we define the sets $A$, $B$, $D$, $Z$ and $T$ as in  \cref{genprop-1}, and we use the properties in  \cref{genprop-1}. Clearly  $b_1 \in B_1$ and $b_5 \in B_5$. By \ref{xto}, we have $b_1b_5\notin E(G)$. Then from \ref{b}, we have $|B_i| \leq 1$ for each $i\in \{1,2,4,5\}$, $D_2 \cup D_5 \cup Z_1 = \emptyset$, $[A_1, D_1] = \emptyset$, $[A_2, D_3] = \emptyset$ and $[A_5, D_4] = \emptyset$. Also if there are at least three vertices in $B_3$, then from \ref{x}, it follows that there exists a vertex, say $b_3 \in B_3$ such that $b_3 b_1, b_3 b_5 \not\in E(G)$, and then $\{b_1, v_2,b_5, v_5, v_4, b_3\}$ induces a $P_2+P_4$ which is a contradiction; thus $|B_3|\leq 2$.  Now if $B_3\neq \es$, then we let $B_3'$ consist  of one vertex of $B_3$, otherwise we let $B_3'=\es$. Then we define the sets
     $S_1 := \{v_3\} \cup A_1 \cup B_1 \cup B_5\cup D_1 $,
        $S_2 := A_2 \cup B_2 \cup D_3 $,
        $S_3 := \{v_4\} \cup A_3 \cup T$,
        $S_4 := \{v_2, v_5\} \cup A_4 \cup B_3'$,
        $S_5 := \{v_1\} \cup A_5 \cup B_4 \cup D_4 $ and
        $S_6 := (Z\setminus Z_1)\cup (B_3\sm B_3')$ so that $V(G)=S_1\cup S_2\cup \cdots \cup S_6$. Then from \ref{DZ}, \ref{xto} and \ref{adt}, and from above arguments, clearly $S_j$ is a stable set, for   each $j\in \{1,2,\ldots, 6\}$, and hence  $\chi(G) \leq 6$. This proves \cref{F1}.
\end{proof}

\begin{lemma}\label{F2}
    Let $G$ be a ($P_2+P_4$, $K_4 - e$)-free graph. If $G$ contains an $F_2$, then   either $\chi(G)\leq 6$ or $\chi(G)= \omega(G)$.
\end{lemma}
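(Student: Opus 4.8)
The plan is to mimic the structure of the proof of \cref{F1}, but now the presence of an $F_2$ (rather than an $F_1$) forces a different local configuration around the $C_5$, and I expect this to be the configuration that can actually realize $\chi(G)=\omega(G)$ when it is large. First I would fix the copy of $F_2$ inside $G$ with the labelling of Figure~\ref{Gi}, set $C:=\{v_1,\ldots,v_5\}$, and invoke all of the structural apparatus of \cref{genprop-1}: the partition $V(G)=C\cup A\cup B\cup D\cup Z\cup T$ together with the properties \ref{DZ}--\ref{tz}. The extra edges of $F_2$ beyond the bare $C_5$ should pin down which of the sets $B_i$, $D_i$, $Z_i$ are nonempty and should bound several of them (via \ref{b}\ref{bb}, \ref{ab}, and the ``at most one neighbor'' clauses of \ref{x}), exactly as the $b_1,b_5$ vertices did in \cref{F1}.

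The core of the argument will be to split into two regimes according to $\omega(G)$. When $\omega(G)$ is small (I expect the threshold to be $\omega\le 6$, matching the statement), I would aim to partition $V(G)$ into at most six stable sets, just as in \cref{F1}: group $C$ together with the $A_i$, the bounded $B_i$, the $D_i$, and the singleton $Z_i$'s into six classes, each verified to be independent using \ref{adt} (for $A_i\cup T$), \ref{DZ} (for $D_i$ and $|Z_i|\le1$), and \ref{xto}/\ref{x} to kill cross-edges. This yields $\chi(G)\le 6$. When $\omega(G)$ is large, the $F_2$-configuration should concentrate the graph's complexity into one clique-like region; here the goal is to show $G$ can be colored with exactly $\omega(G)$ colors, i.e.\ to exhibit an $\omega(G)$-clique and a colouring that meets it. The natural route is to identify a dominant clique $K$ (most likely sitting inside some $B_i\cup Z_{i-2}$, which is a clique by \ref{x}, together with two consecutive $C_5$-vertices) with $|K|=\omega(G)$, and then colour the remaining low-degree or stable strata $A$, $D$, $T$ and the opposite $B_j$'s using colours already spent on $K$, arguing that each such vertex misses at least one colour of $K$ because its neighbourhood inside $K$ is bounded by the properties.

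The main obstacle, I expect, is the large-$\omega$ case: one must show not merely a good upper bound but the exact equality $\chi(G)=\omega(G)$, which requires producing a genuinely optimal colouring rather than a $6$-colouring. Concretely, the difficulty is controlling how vertices of the sparse sets $A_i$, $D_i$, $T$, and the ``other'' blobs $B_j$ attach to the big clique $K$: I would need the bound from \ref{x} that each vertex of $B_i\cup Z_{i-2}$ has at most one neighbour in each of the two adjacent blobs, the emptiness relations of \ref{xto} and \ref{da}, and the size bounds of \ref{ab} and \ref{b} to guarantee that every outside vertex can reuse a colour of $K$. There may also be a genuinely sporadic small-$\omega$ case (around $\omega\in\{3\}$, echoing the Schläfli-type example in the introduction) where neither a clean $6$-stable-set partition nor the clique-dominant colouring applies directly; handling that boundary, and checking that the two regimes together cover all of $V(G)$ without a gap, will be the fiddly part of the write-up.
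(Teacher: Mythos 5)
Your skeleton (explicit six-part stable partitions in the degenerate configurations, an optimal colouring when a large clique appears) has the right shape, but the decisive step --- the equality case --- does not work as you describe it. You propose to exhibit an $\omega(G)$-clique $K$ and let every outside vertex ``reuse a colour of $K$'' because its neighbourhood inside $K$ is bounded. Nonadjacency to some vertex of $K$ only gives each outside vertex a nonempty \emph{list} of permissible colours; it does not produce a proper colouring, since outside vertices adjacent to one another must in addition receive distinct colours, and nothing in your sketch coordinates their choices. What is needed, and what the paper actually does at this point (inside the claim eliminating $B_3$, in the branch $|B_3|\geq 3$, which by \ref{x} forces the $5$-clique $\{u,v,b_3,v_3,v_4\}$ and hence $\omega(G)\geq 5$), is a decomposition $V(G)=W_1\cup W_2\cup R$: here $W_1:=(B_1\cup Z_4)\cup(B_3\cup Z_1)$ is a union of two cliques, hence perfect, with $\omega(G[W_1])\leq\omega(G)-2$ because each of these cliques extends by two consecutive vertices of the $C_5$; $W_2:=\{v_5\}\cup A_3\cup D_2\cup D_5\cup Z_5$ is bipartite and anticomplete to $W_1$ (this requires proving $[A_3,B_1\cup Z_4]=\emptyset$ via a $P_2+P_4$, on top of \ref{xto}); and $R$ splits into two stable sets after proving $[A_4,D_1]=\emptyset$ and using \ref{b}:\ref{tz}. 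This yields $\chi(G)\leq\max\{\omega(G)-2,\,2\}+2=\omega(G)$, whence equality. Your greedy idea is closer to the paper's later proof of \cref{F5}, where a clique $M$ complete to $\{v_1,v_2\}$ has $|M|\leq\omega(G)-6$ and one bounds $\chi(G)\leq\chi(G[M])+\chi(G-M)$; but even there the bound comes from adding a full $6$-colouring of $G-M$, not from colour reuse on a dominant clique.

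Two further misalignments. First, the case split is structural, not by the size of $\omega(G)$: the proof successively reduces to $B_4=\emptyset$, $B_1=\{b_1\}$, $B_3=\emptyset$ and $D_1=\emptyset$, obtaining $\chi(G)\leq 6$ in each degenerate branch \emph{regardless} of $\omega(G)$, and only the branch $|B_3|\geq 3$ yields the equality outcome; the threshold that actually appears is $\omega(G)\geq 5$, not your guessed $\omega\leq 6$, and there is no sporadic $\omega=3$ case to patch --- the Schl\"afli-type graph only witnesses tightness of the bound $6$. Second, you never invoke \cref{F1}: assuming $G$ is $F_1$-free is what gives $B_2\cup B_5=\emptyset$ at the outset (and, one step further, $F_2$-freeness is what later lemmas lean on), and without that reduction the six stable sets you plan to assemble will not close, since the properties \ref{DZ}--\ref{tz} alone do not kill the cross-edges involving $B_2$ and $B_5$.
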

\begin{proof}
 Let $G$ be a ($P_2+P_4$, $K_4 -e$)-free graph. Suppose that $G$ contains an $F_2$. We label such an $F_2$ as shown in Figure \ref{Gi}, and we let $C := \{v_1, v_2, v_3, v_4, v_5\}$. Then with respect to $C$, we define the sets $A$, $B$, $D$, $Z$ and $T$ as in  \cref{genprop-1}, and we use the properties in  \cref{genprop-1}. Clearly   $b_1\in B_1$ and $a_3 \in A_3$. From \cref{F1}, we may assume that $G$ is $F_1$-free; and so $B_2   \cup B_5 = \emptyset$.  From \ref{b}:\ref{da}, we have $[A_1, D_1] = \emptyset$, $[A_1,D_5]=\es$ and $[A_2,D_2]=\es$. Also since $A_3\neq \es$, from \ref{ab},  we have $|B_1|\leq 2$; hence $|B_1\sm \{b_1\}|\leq 1$.  Moreover  we claim the following.

\begin{claim}\label{F2-B4}
 We may assume that $B_4$ is an empty set.
 \end{claim}
\noindent{\it Proof of \cref{F2-B4}}.~Suppose that $B_4\neq \es$, and let $b_4\in B_4$. Then since $G$ is $F_1$-free, it follows from \ref{xto}  that $B_3= \emptyset$. Also  from \ref{b}:\ref{da}, it follows that $[A_j, D_j] = \emptyset$, for $j\in \{4,5\}$, and since $A_3\neq \es$, from \ref{ab},  we have   $|B_4| \leq 2$; hence $|B_4\sm \{b_4\}| \leq 1$.
    Now we define the sets
        $S_1 := \{ v_4, b_1\} \cup A_1 \cup D_1 \cup Z_3$,
        $S_2 := \{v_5\} \cup A_2 \cup (B_1\sm \{b_1\}) \cup D_2 $,
        $S_3 := \{v_2\} \cup A_3 \cup T$,
        $S_4 := \{v_1, b_4\} \cup A_4 \cup D_4$,
        $S_5 := \{v_3\} \cup  (B_4\sm \{b_4\})\cup A_5 \cup D_5$ and
        $S_6 := D_3 \cup Z_1 \cup Z_2 \cup Z_4 \cup Z_5$ so that $V(G)=S_1\cup S_2\cup \cdots \cup S_6$. Then from \ref{DZ}, \ref{xto} and \ref{adt}, and from above arguments, clearly $S_j$ is a stable set, for each  $j\in \{1,2,\ldots, 6\}$, and hence  $\chi(G) \leq 6$.  So we may assume that $B_4= \es$. $\sq$

\smallskip
Next:
\begin{claim}\label{F2-B1=1}
 We may assume that $B_1 =\{b_1\}$.
\end{claim}
\noindent{\it Proof of \cref{F2-B1=1}}.~Suppose that $B_1 \sm \{b_1\} \neq \es$, and let $b_1'\in B_1 \sm \{b_1\}$.  Then from \ref{ab}, we have $B_1=\{b_1, b_1'\}$, and from \ref{b}:\ref{b2d}, we have $D_1 \cup D_2= \emptyset$. Also from \ref{x}, we have $b_1b_1'\in E(G)$. Further since $\{a_3, b_1, b_1', v_1\}$ does not induce a $K_4 -e$ or $\{ b_1, b_1', a_3, v_3, v_4, v_5\}$ does not induce  a $P_2+P_4$, we may assume that  $a_3 b_1 \in E(G)$ and $a_3 b_1' \not\in E(G)$.  Moreover, we have the following.
    \begin{enumerate}[label=($\roman*$)]\itemsep=0pt
     \item\label{F22-A2}  If there is a vertex, say $a_2\in A_2$, then by using \ref{xto}, we see that $\{v_4, v_5, a_3, b_1, v_2, a_2\}$ or $\{v_4, v_5,a_2, $ $ a_3, b_1, b_1'\}$   induces a $P_2 + P_4$; so $A_2 = \emptyset$.

\item\label{F22-B3}  If there are three vertices, say $u,v,w \in B_3$, then by using \ref{x}, we may assume that $ub_1,vb_1\notin E(G)$, and then from \ref{xto}, we see that $\{u, v, a_3, b_1, v_1, v_5\}$   induces a $P_2 + P_4$; so $|B_3|\leq 2$.
\item \label{F22-A3B1}   For any $a' (\neq a_3) \in A_3$,  if $a'b_1'\in E(G)$, then since $\{a', b_1, b_1', v_1\}$ does not induce a $K_4 -e$, we have $a'b_1\notin E(G)$, and then $\{v_4, v_5, a_3, b_1,b_1', a'\}$  induces  a $P_2+P_4$ (by \ref{adt}); so $[A_3, \{b_1'\}] = \emptyset$, since $a'$ is arbitrary.

    \item\label{F22-A1D3}  For any $a\in A_1$ and $d\in D_3$, since $\{ b_1, b_1',a, d, v_4, v_5\}$ does not induce a $P_2 + P_4$ (by \ref{xto}), we have $[A_1, D_3] = \emptyset$.

    \end{enumerate}
Now from \cref{F2-B4}, we have $V(G)=C\cup (A\sm A_2)\cup B_1\cup B_3 \cup D_3\cup D_4\cup D_5\cup Z\cup T$.  If $B_3\neq \es$, then we let $B_3'$ consist of one vertex of $B_3$, otherwise we let $B_3'=\es$. Then we define the sets
        $S_1 :=  \{v_4, b_1'\} \cup A_3$,
        $S_2 := \{v_3, v_5, b_1\} \cup A_1 \cup D_3$,
        $S_3 :=  A_4 \cup B_3' \cup Z_2$,
        $S_4 := \{v_2\} \cup A_5 \cup T$,
        $S_5 := \{v_1\} \cup  (B_3\sm B_3')\cup D_4 \cup Z_5$ and
        $S_6 := D_5 \cup Z_1 \cup Z_3 \cup Z_4$ so that $V(G)= S_1\cup S_2\cup \cdots \cup S_6$. Then from \ref{DZ}, \ref{xto} and \ref{adt}, and from above arguments, we conclude that $S_j$ is a stable set, for each  $j\in \{1,2,\ldots, 6\}$, and hence  $\chi(G) \leq 6$, and we are done. So we   may assume that $B_1 =\{b_1\}$. $\sq$

\smallskip
Next:
\begin{claim}\label{F2-B3}
 We may assume that $B_3$ is an empty set.
 \end{claim}
\noindent{\it Proof of \cref{F2-B3}}.~Suppose that $B_3\neq \es$, and let $b_3\in B_3$. Then from \ref{b}:\ref{da}, we have $[A_3, D_3] = \emptyset = [A_4,D_4]$. Now if $|B_3| \leq 2$, then we define the sets
        $S_1 := \{v_3, b_1\} \cup A_1 \cup D_1$,
        $S_2 :=  \{v_4\} \cup A_2   \cup D_2 \cup Z_5$,
        $S_3 := \{v_5, b_3\} \cup A_3 \cup D_3$,
        $S_4 := \{v_1\} \cup  A_4 \cup  (B_3\setminus\{b_3\}) \cup D_4$,
        $S_5 := \{v_2\} \cup A_5 \cup T$ and
        $S_6 := D_5 \cup Z_1 \cup Z_2 \cup Z_3 \cup Z_4$ so that $V(G)= S_1\cup S_2\cup \cdots \cup S_6$ (by \ref{F2-B4} and \ref{F2-B1=1}), and then from \ref{DZ}, \ref{xto} and \ref{adt}, and from above arguments, we conclude that $S_j$ is a stable set, for each  $j\in \{1,2,\ldots, 6\}$, and hence  $\chi(G) \leq 6$ and we are done. So we may assume that $|B_3| \geq 3$. Then there are vertices, say $u$ and $v$ in $B_3\sm \{b_3\}$. Thus from \ref{x}, since $\{u,v, b_3,v_3,v_4\}$ is a clique, we have $\omega(G)\geq 5$. Then from \ref{ab} and \ref{b}:\ref{b2d}, we have $A_2 \cup A_5 \cup D_3 \cup D_4 = \emptyset$ and from \ref{b}:\ref{tz}, we have $[T, Z\sm Z_1]=\es$. Now  from \ref{F2-B4} and \ref{F2-B1=1}, we have $V(G) = C \cup A_1 \cup A_3 \cup A_4 \cup \{b_1\} \cup B_3 \cup D_1 \cup D_2 \cup D_5 \cup Z \cup T$,
  and we let $W_1:=(B_1\cup Z_4) \cup (B_3\cup Z_1)$ and $W_2:= \{v_5\} \cup A_3 \cup D_2 \cup D_5 \cup Z_5$. Then the following hold.
 \begin{enumerate}[label=($\roman*$)]\itemsep=0pt
     \item\label{F2-W1} Since  $\{v_i, v_{i+1}\}$ is complete to $B_i\cup Z_{i-2}$ for any $i$, from \ref{x}, clearly $G[W_1]$ is perfect with $\omega(G[W_1])\leq \omega(G)-2$; so $\chi(G[W_1])\leq \omega(G)-2$.
     \item\label{F2-W2} From \ref{DZ}, \ref{xto}, \ref{adt}   and \ref{b}:\ref{da}, since $A_3 \cup D_2 \cup Z_5$ and $\{v_5\} \cup D_5$ are stable sets, we see that $G[W_2]$ is a  bipartite graph, and so $\chi(G[W_2])\leq 2$.
    \item\label{F2-W12}  If there are vertices, say $a\in A_3$ and $x \in B_1\cup Z_4$ such that $ax\in E(G)$, then from \ref{x}, we may assume that $xu, xv \notin E(G)$, and then from \ref{x} and \ref{xto}, we see that $\{u,v, a, x, v_1, v_5\}$ induces a $P_2 + P_4$; so $[A_3, B_1\cup Z_4] = \emptyset$. This together with \ref{xto} imply that $[W_1, W_2]=\es$.
     \item\label{F2-A4D1}  If there are vertices, say $a\in A_4$ and $d \in D_1$ such that $ad\in E(G)$, then since $\{v_4,u,v,b_3,d\}$ does not induce a $K_4-e$, we may assume that $du, dv \notin E(G)$, and then from \ref{x} and \ref{xto}, we see that $\{u,v, a, d, v_5, v_1\}$ induces a $P_2 + P_4$; so $[A_4, D_1] = \emptyset$. So from \ref{DZ}, \ref{xto}, \ref{adt}    and  \ref{b}:\ref{tz}, we see that $\{v_1, v_3\} \cup A_4 \cup D_1 \cup Z_2$ and $\{v_2, v_4\} \cup A_1 \cup Z_3 \cup T$ are stable sets.
 \end{enumerate}
  From \ref{F2-W1}, \ref{F2-W2} and \ref{F2-W12}, and since $\omega(G)\geq 5$, we conclude that $\chi(G[W_1 \cup W_2]) = \max\{\chi(G[W_1]), \chi(G[W_2])\} \leq    \max\{\omega(G) - 2, 2\}\leq \omega(G) - 2$. Also from \ref{F2-A4D1}, we see that $\chi(G- (W_1 \cup W_2))\leq 2$. Hence $\chi(G)= \omega(G)$, and we are done. So  we may assume that $B_3= \es$. $\sq$

\smallskip
From \ref{F2-B4}, \ref{F2-B1=1} and \ref{F2-B3}, we conclude that $V(G)= C \cup  A \cup  \{b_1\} \cup D \cup Z \cup T$. Next:

 \begin{claim}\label{F2-D1}
 We may assume that $D_1$ is an empty set.
 \end{claim}
 \noindent{\it Proof of \cref{F2-D1}}.~Suppose that $D_1\neq \es$, and let $d_1\in D_1$.   Then we see that the following hold.
          \begin{enumerate}[label=($\roman*$)]\itemsep=0pt
     \item\label{F2-D1-A3b1} For $a'\in A_3$, since $\{a',v_3,d_1,v_5,v_1,b_1\}$ or $\{b_1,v_1,d_1,a',v_3,v_4\}$ does not induce a $P_2 + P_4$ (by \ref{xto}), we see that $[A_3, \{b_1\}]$ is complete. In particular, $a_3b_1\in E(G)$.
         \item\label{F2-D1-A13} For any $a\in A_1$ and $a'\in A_3$, since $\{v_4,v_5, a,a',b_1,v_2\}$ does not induce a $P_2 + P_4$ (by \ref{xto} and \ref{F2-D1-A3b1}), we have $[A_1, A_3]=\es$.
     \item\label{F2-D1-A1Z1} For any $a_1\in A_1$ and $z_1\in Z_1$, since $\{b_1, v_2,a_1, z_1, v_4, v_5\}$  does not induce a $P_2 + P_4$ (by \ref{xto}), we have $[A_1, Z_1] = \emptyset$.
     \item\label{F2-D1-A12}
         If there are vertices, say $a_1\in A_1$ and $a_2\in A_2$, then from \ref{b}:\ref{da}, we have $a_1a_2\notin E(G)$ and $a_1a_3\notin E(G)$ (by \ref{F2-D1-A13}), and then from \ref{xto} and \ref{F2-D1-A3b1}, one of $\{v_4,v_5,a_2,v_2,b_1,a_3\}$ or  $\{a_1,v_1,a_2,a_3,v_3,v_4\}$  induces a $P_2 + P_4$; so one of $A_1$ or $A_2$ is an empty set.
          \end{enumerate}
Now if $A_2=\es$ or $A_4=\es$, then we define the sets
        $S_1 := \{b_1\} \cup D_2  \cup D_5 \cup Z_3 $,
        $S_2 :=  \{v_2, v_5\} \cup A_1 \cup A_3 \cup Z_1$,
        $S_3 := \{v_1, v_3\} \cup A_5\cup T$,
        $S_4 :=  D_1 \cup D_3 \cup Z_2 \cup Z_4 \cup Z_5$,
        $S_5 :=  \{v_4\} \cup D_4$ and
        $S_6 :=  A_2\cup A_4$   so that $V(G)= S_1\cup S_2\cup \cdots \cup S_6$, and then from \ref{DZ}, \ref{xto}, \ref{adt}, \ref{b}:\ref{da} and from above properties, we conclude that  $S_j$ is a stable set, for each $j\in \{1,2,\ldots, 6\}$, and hence  $\chi(G) \leq 6$.
      So we may  assume that  $A_2\neq \es$ and $A_4\neq \es$, and let $a_2\in A_2$ and $a_4\in A_4$. Then from \ref{F2-D1-A12}, we have $A_1=\es$. Then since
      $\{v_4,v_5,a_2,v_2,b_1,a_3\}$ does not induce a $P_2 + P_4$ (by \ref{F2-D1-A3b1} and \ref{xto}), we  have $a_2a_3\in E(G)$, and then since
      $\{v_1,v_5,v_3,a_3,a_2,a_4\}$ or $\{v_4,a_4,v_1,v_2,a_2,a_3\}$ does not induce a $P_2 + P_4$, we have $a_3a_4\in E(G)$. So
      from \ref{aa1},   $[A_3,A_4]$ is complete. Hence if there are adjacent vertices, say
    $a\in A_3$ and $d\in D_4$, then $\{a, d, a_4, v_3\}$ induces a $K_4 - e$ or $\{v_1, v_2, v_4, a_4, a, d\}$ induces $P_2 + P_4$; so $[A_3,D_4]=\es$. Now  we define the sets
        $S_1 := \{b_1\} \cup D_2 \cup D_5 \cup Z_3 $, $S_2 :=  \{v_2, v_5\} \cup A_4 \cup Z_1$, $S_3 := \{v_1, v_3\} \cup A_5 \cup T$,   $S_4 :=  D_1 \cup D_3 \cup Z_2 \cup Z_4 \cup Z_5$,        $S_5 :=  \{v_4\} \cup A_3\cup D_4$ and  $S_6 :=  A_2$ so that $V(G)= S_1\cup S_2\cup \cdots \cup S_6$. Then from \ref{DZ}, \ref{xto}, \ref{adt} and  \ref{b}:\ref{da}, and from above properties, we conclude that $S_j$ is a stable set, for each  $j\in \{1,2,\ldots, 6\}$, and hence  $\chi(G) \leq 6$.  So we may assume that $D_1 = \emptyset$. $\sq$

   \medskip
By using \cref{F2-B4} to \cref{F2-D1}, we see that  $V(G) = C \cup  A \cup  \{b_1\} \cup(D\sm D_1)\cup Z \cup T$.
 Now if $[A_3, A_4] = \emptyset$,  then we define the sets
        $S_1 :=  \{v_3, v_5, b_1\} \cup A_1 \cup D_5$,
        $S_2 := A_2 \cup D_2 \cup Z_4$,
        $S_3 := \{v_2, v_4\} \cup A_5 \cup T$,
        $S_4 := \{v_1\} \cup A_3 \cup A_4$,
        $S_5 := D_3$ and
        $S_6 := D_4 \cup Z_1 \cup Z_2 \cup Z_3 \cup Z_5$ so that $V(G)= S_1\cup S_2\cup \cdots \cup S_6$, and then from \ref{DZ}, \ref{xto}, \ref{adt} and  \ref{b}:\ref{da}, we conclude that $S_j$ is a stable set, for each $j\in \{1,2,\ldots, 6\}$, and hence  $\chi(G) \leq 6$; so we may assume that $[A_3, A_4] \neq \emptyset$. Then from \ref{aa1}, we see that $[A_3, A_4]$ is complete. Now if there are vertices, say $a_4 \in A_4$ and $d_3 \in D_3$ such that $a_4d_3\in E(G)$, then  $\{v_1, v_5, v_3, a_3, a_4, d_3\}$ induces a $P_2 + P_4$ or $\{a_3, d_3, a_4, v_4\}$ induces $K_4 - e$; so $[A_4,D_3]=\es$. Then we define the sets
            $S_1 :=  \{v_3, v_5, b_1\} \cup A_1 \cup D_5$,
        $S_2 := A_2 \cup D_2 \cup Z_4$,
        $S_3 := \{v_2, v_4\} \cup A_5 \cup T$,
        $S_4 := \{v_1\} \cup A_3$,
        $S_5 :=  A_4\cup D_3$
         and
        $S_6 := D_4 \cup Z_1 \cup Z_2 \cup Z_3 \cup Z_5$ so that $V(G)= S_1\cup S_2\cup \cdots \cup S_6$. Then from \ref{DZ}, \ref{xto} and \ref{adt}, and from above properties, we conclude that $S_j$ is a stable set, for each  $j\in \{1,2,\ldots, 6\}$, and hence  $\chi(G) \leq 6$, and we are done. This proves \cref{F2}.
 \end{proof}

\begin{lemma}\label{F3}
    Let $G$ be a ($P_2+P_4$, $K_4 -e$)-free graph. If $G$ contains an $F_3$, then  either $\chi(G)\leq 6$ or $\chi(G)= \omega(G)$.
\end{lemma}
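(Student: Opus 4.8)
The plan is to follow the template established in the proofs of Lemmas~\ref{F1} and~\ref{F2}. I would fix a copy of $F_3$ labelled as in Figure~\ref{Gi}, take its underlying $C_5$ to be $C=\{v_1,\ldots,v_5\}$, and read off from the labelling which of the sets $A_i,B_i,D_i,Z_i$ contain the distinguished vertices of $F_3$. I would then work with the partition $V(G)=C\cup A\cup B\cup D\cup Z\cup T$ and keep the full list of properties \ref{DZ}--\ref{b} at hand. Crucially, since Lemmas~\ref{F1} and~\ref{F2} already settle every graph that contains an $F_1$ or an $F_2$, I would assume at the outset that $G$ is both $F_1$-free and $F_2$-free; as in the proof of Lemma~\ref{F2} (where $F_1$-freeness forced $B_2\cup B_5=\emptyset$), these two assumptions should immediately annihilate or bound several of the $B_i$ and $D_i$ sitting around the copy of $F_3$.

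The core of the argument would be a chain of reduction claims, each asserting that \emph{we may assume} a particular set (one of the surviving $B_i$ or $D_i$) is empty. Each such claim I would prove by a dichotomy: either the set is nonempty but the ambient constraints are already tight enough that $V(G)$ splits into six stable sets $S_1,\ldots,S_6$, giving $\chi(G)\le 6$ and closing that branch; or a vertex of the set together with the $C_5$ and a suitable neighbour produces a forbidden $P_2+P_4$ (via \ref{adt} and \ref{xto}) or a $K_4-e$ (via \ref{x}), ruling out the relevant adjacency. Iterating, I would whittle $V(G)$ down to $C\cup A\cup(\text{a bounded piece of }B)\cup(\text{a few }D_i)\cup Z\cup T$, at which point the remaining cross-edges are controlled by \ref{aa1}, \ref{b}:\ref{da} and \ref{b}:\ref{b2d}.

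The endgame would then bifurcate according to the size of the relevant $B_i$. If every such $B_i$ has at most two vertices, then $\omega(G)$ stays small and I would exhibit an explicit partition into six stable sets, using \ref{adt} for the stability of the $A_i\cup T$ parts, \ref{DZ} for the $D_i$ together with the $|Z_i|\le 1$ bounds, and \ref{xto} to certify that no edge lies inside any $S_j$; this yields $\chi(G)\le 6$. If instead some $B_i$ is large, then \ref{x} forces $\omega(G)\ge 5$, and I would mimic Claim~\ref{F2-B3}: set $W_1$ to be the union of the relevant $B\cup Z$ cliques, which by \ref{x} is perfect with $\omega(G[W_1])\le\omega(G)-2$ and hence $(\omega(G)-2)$-colourable, peel off a bipartite part $W_2$ coloured with two colours, and colour the low-clique remainder with the remaining colours, thereby obtaining $\chi(G)=\omega(G)$.

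The step I expect to be the main obstacle is the large-$\omega$ regime, specifically verifying that the cross-edge sets $[W_1,W_2]$ and $[W_1,V(G)\setminus(W_1\cup W_2)]$ are empty, in the spirit of items~\ref{F2-W12} and~\ref{F2-A4D1}. Establishing this emptiness requires determining exactly which $A_i$ and $D_i$ vertices may attach to the large clique contained in some $B_i$, and it is here that the most delicate case analysis and the sharpest applications of $P_2+P_4$-freeness will be required; pinning down these adjacency restrictions correctly is precisely what makes the clean decomposition $\chi(G)=\max\{\chi(G[W_1]),\,2\}=\omega(G)$ go through.
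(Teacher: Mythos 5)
Your overall plan coincides with the paper's proof: after invoking \cref{F1,F2} to assume $(F_1,F_2)$-freeness (which at once yields $B_2\cup B_3\cup B_5\cup A_1\cup A_3\cup A_5=\emptyset$), the paper splits exactly as you predict --- if $|B_1|\le 2$ and $|B_4|\le 2$ it exhibits six explicit stable sets, and if (say) $|B_1|\ge 3$ it sets $W_1:=(B_1\cup Z_4)\cup(B_4\cup Z_2)$ and $W_2:=\{v_3\}\cup D_3\cup D_5\cup Z_3$ and argues as in \cref{F2-B3}. One place where you overestimate the work: no chain of ``we may assume this set is empty'' reduction claims is needed here; the $(F_1,F_2)$-freeness consequences plus the single dichotomy on $|B_1|,|B_4|$ suffice (with $|B_1|\ge 3$ forcing $D_1\cup D_2=\emptyset$ by \ref{b}:\ref{b2d}, and $\omega(G)\ge 5$ since $\{v_1,v_2\}\cup B_1$ is a clique by \ref{x}), and $[W_1,W_2]=\emptyset$ is immediate from \ref{xto} rather than delicate.

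The genuine slip is in your large-clique endgame. You propose to verify that \emph{both} $[W_1,W_2]$ and $[W_1,V(G)\setminus(W_1\cup W_2)]$ are empty and to conclude $\chi(G)=\max\{\chi(G[W_1]),\,2\}=\omega(G)$. The second emptiness is false and unnecessary: $v_1$ and $v_2$ lie outside $W_1\cup W_2$ and are complete to $B_1\subseteq W_1$. Moreover the max-formula cannot close the count: $\max\{\omega(G)-2,\,2\}=\omega(G)-2<\omega(G)$, contradicting $\chi(G)\ge\omega(G)$. The correct bookkeeping is additive, as in the paper: $[W_1,W_2]=\emptyset$ gives $\chi(G[W_1\cup W_2])\le\max\{\omega(G)-2,\,2\}=\omega(G)-2$, and then the remainder $V(G)\setminus(W_1\cup W_2)$ is shown to be $2$-colorable with two \emph{fresh} colors, whence $\chi(G)\le(\omega(G)-2)+2=\omega(G)$. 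The only nontrivial adjacency check needed for that last step --- the place you rightly flagged as the crux, namely how $A_i$ and $D_i$ vertices attach --- is $[A_2,D_4]=\emptyset$: given adjacent $a\in A_2$, $d\in D_4$, the vertex $d$ has two non-neighbors $u,v\in B_1$ (since $\{d\}\cup B_1$ contains no $K_4-e$ and $|B_1|\ge 3$, using \ref{x}), and $\{u,v,a,d,v_3,v_4\}$ induces a $P_2+P_4$ by \ref{xto}. With that, $\{v_1,v_4\}\cup A_2\cup D_4\cup Z_5$ and $\{v_2,v_5\}\cup A_4\cup Z_1\cup T$ are stable by \ref{DZ}, \ref{xto}, \ref{adt} and \ref{b}:\ref{tz}, and the argument goes through.
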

\begin{proof}
 Let $G$ be a ($P_2+P_4$, $K_4 -e$)-free graph.  Suppose that $G$ contains an $F_3$. We label such an $F_3$ as shown in Figure~\ref{Gi}, and we let $C := \{v_1, v_2, v_3, v_4, v_5\}$. Then with respect to $C$, we define the sets $A$, $B$, $D$, $Z$ and $T$ as in  \cref{genprop-1}, and we use the properties in  \cref{genprop-1}. Clearly    $b_1 \in B_1$ and $b_4 \in B_4$. From \cref{F1,F2}, we may assume that $G$ is $(F_1, F_2)$-free, and hence it follows that $B_2 \cup B_3 \cup B_5 \cup A_1 \cup A_3 \cup A_5 = \emptyset$.
    Now if $|B_1| \leq 2$ and $|B_4| \leq 2$, then we define the sets
        $S_1 := \{v_1, v_4\} \cup A_2 \cup T$,
        $S_2 :=  \{v_2, b_4\} \cup A_4 \cup D_4$,
        $S_3 := \{v_3, v_5, b_1\} \cup D_5$,
        $S_4 := (B_1\sm \{b_1\}) \cup Z_2$,
        $S_5 :=  (B_4\sm \{b_4\}) \cup D_1 \cup D_3$ and
        $S_6 := D_2 \cup Z_1 \cup Z_3 \cup Z_4 \cup Z_5$ so that $V(G)= S_1\cup S_2\cup \cdots \cup S_6$. Then from \ref{DZ}, \ref{xto}, \ref{adt}, \ref{b}:\ref{da}, and from above properties, we conclude that $S_j$ is a stable set, for each $j\in \{1,2,\ldots, 6\}$, and hence  $\chi(G) \leq 6$, and we are done.
  So we may assume that $|B_1|\geq 3$.   Then from \ref{b}:\ref{b2d}, we have $D_1 \cup D_2 = \emptyset$, and so $V(G) = C \cup A_2 \cup A_4 \cup  B_1 \cup B_4 \cup D_3 \cup D_4 \cup D_5 \cup Z \cup T$. Also since $\{v_1,v_2\}\cup B_1$ is a clique (by \ref{x}), we have $\omega(G)\geq 5$. Now we let $W_1:= (B_1\cup Z_4) \cup (B_4\cup Z_2)$ and $W_2:= \{v_3\} \cup D_3 \cup D_5 \cup Z_3$.
   Then the following hold.
 \begin{enumerate}[label=($\roman*$)]\itemsep=0pt
     \item\label{F3-W1} Since $\{v_i, v_{i+1}\}$ is complete to $B_i\cup Z_{i-2}$ for any $i$, from \ref{x}, clearly $G[W_1]$ is perfect with $\omega(G[W_1])\leq \omega(G)-2$; so $\chi(G[W_1])\leq \omega(G)-2$.
     \item\label{F3-W2} From   \ref{DZ} and \ref{xto}, since $D_5 \cup Z_3$ and $\{v_3\} \cup D_3$ are stable sets, we see that $G[W_2]$ is a  bipartite graph, and so $\chi(G[W_2])\leq 2$.
    \item\label{F3-W12}  From \ref{xto}, we have $[W_1, W_2]=\es$.
    \item\label{F3-A2D4} If there are adjacent vertices, say $a\in A_2$ and $d\in D_4$, then since $|B_1|\geq 3$ and since $G[\{d\}\cup B_1]$ does not induce a $K_4-e$, there are vertices, say $u$ and $v$, in $B_1$, such that $du,dv\notin E(G)$ (by using \ref{x}), and then from \ref{xto}, $\{u,v,a,d,v_3,v_4\}$ induces a $P_2+P_4$; so $[A_2,D_4]=\es$.
     \item\label{F3-A4D1}  From \ref{DZ}, \ref{xto}, \ref{adt}, \ref{b}:\ref{tz} and from \ref{F3-A2D4}, we see that $\{v_1, v_4\} \cup A_2 \cup D_4 \cup Z_5$ and $\{v_2, v_5\} \cup A_4 \cup Z_1 \cup T$ are stable sets.
 \end{enumerate}
  Now from \ref{F3-W1}, \ref{F3-W2} and \ref{F3-W12}, and since $\omega(G)\geq 5$, we conclude that $\chi(G[W_1 \cup W_2]) = \max\{\chi(G[W_1]),$ $ \chi(G[W_2])\} \leq    \max\{\omega(G) - 2,~ 2\}\leq \omega(G) - 2$. Also from \ref{F3-A4D1}, we see that $\chi(G-(W_1 \cup W_2))\leq 2$. Hence $\chi(G)= \omega(G)$, and we are done. This proves \cref{F3}.
\end{proof}

\begin{lemma}\label{F5}
    Let $G$ be a ($P_2+P_4$, $K_4 -e$)-free graph. If $G$ contains an $F_4$, then  either $\chi(G)\leq 6$ or $\chi(G)= \omega(G)$.
\end{lemma}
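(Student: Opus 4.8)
The plan is to mirror the strategy of the proofs of \cref{F2,F3}. First I would fix the copy of $F_4$ with the labelling of Figure~\ref{Gi}, set $C:=\{v_1,v_2,v_3,v_4,v_5\}$, and introduce the sets $A,B,D,Z,T$ together with the properties \ref{DZ}--\ref{b} established in \cref{genprop-1}. Since \cref{F1,F2,F3} dispose of graphs containing $F_1$, $F_2$ or $F_3$, I may assume that $G$ is $(F_1,F_2,F_3)$-free. As in the earlier proofs, this kills most of the attachment classes: $F_1$- and $F_3$-freeness prevents two non-empty $B$'s on distinct edges, and $F_2$-freeness removes the $A$'s ``opposite'' to the distinguished edge of $F_4$. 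I would record the resulting emptiness relations and, via \ref{ab} and \ref{b}:\ref{da}, the vanishing of the relevant $[A_i,D_j]$ sets, exactly as is done at the start of the proofs of \cref{F2,F3}, leaving only a short list of possibly-nonempty classes around the distinguished edge $v_iv_{i+1}$ of $F_4$.

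I would then split into two regimes according to the size of the clique $\{v_i,v_{i+1}\}\cup B_i$. \textbf{(a)} If every surviving $B_i$ satisfies $|B_i|\le 2$, I would exhibit an explicit partition $V(G)=S_1\cup\cdots\cup S_6$ into six stable sets, built as in \cref{F2,F3}: each $S_j$ takes one or two of the $C$-vertices, the compatible $A_i$ together with $T$ (stable by \ref{adt}), at most one vertex from each clique $B_i\cup Z_{i-2}$ (by \ref{x}), and those $D_i,Z_i$ that are mutually non-adjacent and send no edge to the chosen $C$-vertices (by \ref{DZ} and \ref{xto}). Checking that each $S_j$ is stable and that the six classes cover $V(G)$ then yields $\chi(G)\le 6$.

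\textbf{(b)} If some surviving $B_i$ has $|B_i|\ge 3$, then $\{v_i,v_{i+1}\}\cup B_i$ is a clique of size at least $5$, so $\omega(G)\ge 5$; moreover \ref{ab}, \ref{b}:\ref{b2d} and \ref{b}:\ref{tz} force the neighbouring $A$'s and $D$'s, and the edges $[T,Z\setminus Z_{i-2}]$, to disappear. Here I would reproduce the $W_1/W_2$ decomposition of \cref{F3}: set $W_1:=(B_i\cup Z_{i-2})\cup(B_j\cup Z_{j-2})$ for the two relevant edge-classes $i,j$, which by \ref{x} is covered by two cliques, hence co-bipartite and therefore perfect with $\omega(G[W_1])\le\omega(G)-2$, so $\chi(G[W_1])\le\omega(G)-2$; let $W_2$ be the ``opposite'' part $\{v_\ast\}\cup D_\ast\cup D_\ast\cup Z_\ast$, which is bipartite by \ref{DZ}, \ref{xto} and \ref{b}:\ref{da}; show $[W_1,W_2]=\emptyset$ using \ref{xto}; and show that the remaining vertices split into two stable sets. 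Then $\chi(G[W_1\cup W_2])\le\max\{\omega(G)-2,\,2\}=\omega(G)-2$ (as $\omega(G)\ge5$) and $\chi(G-(W_1\cup W_2))\le 2$, whence $\chi(G)\le\omega(G)$, i.e.\ $\chi(G)=\omega(G)$.

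The main obstacle, as in \cref{F2,F3}, lies entirely in regime (b): proving the emptiness statements $[W_1,W_2]=\emptyset$, the vanishing of the stray cross edges such as $[A_\ast,D_\ast]$, and the bipartiteness of the leftover. Each of these needs a careful extraction of a forbidden subgraph — typically a $P_2+P_4$ whose $P_2$ is a pair of non-adjacent vertices of the large clique $B_i$ supplied by \ref{x}, and whose $P_4$ runs through $C$ and the offending edge, or else a $K_4-e$. A secondary but delicate point is the bookkeeping in regime (a): after the $(F_1,F_2,F_3)$-reduction one must verify that the six classes genuinely partition $V(G)$ and that no choice of representatives re-introduces an edge. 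This is where the case split on which single $B_i$ (or $A_i$) survives in $F_4$ must be resolved, presumably through one or two auxiliary claims analogous to the internal claims in the proof of \cref{F2}.
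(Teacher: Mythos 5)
Your regime (a) is roughly in line with the paper, which likewise disposes of the small case by an explicit six-coloring (though it first eliminates $D_1\cup D_2$ via \ref{b}:\ref{b2d}, and its threshold is $|B_1|\le 3$, using two representatives of $B_1$). The genuine gap is in regime (b), which is the heart of the lemma: the $W_1/W_2$ decomposition of \cref{F3} does not transplant to $F_4$. After the $(F_1,F_2,F_3)$-reduction the paper records $(B\sm B_1)\cup A_3\cup A_5=\emptyset$, so for $F_4$ only \emph{one} $B$-class survives — your ``two relevant edge-classes $i,j$'' do not exist, and $W_1=B_1\cup Z_4$ is a single clique. Your scheme then needs $V(G)\sm(W_1\cup W_2)$ to be $2$-colorable, and this fails: the remainder necessarily contains four of the five $C_5$-vertices together with $A_1,A_2,A_4,D_4$ (and possibly $Z_5$, $T$), and $D_4$ cannot be moved into $W_2$ because \ref{xto} only yields $[B_1\cup Z_4,\,D\sm D_4]=\emptyset$, so $[W_1,W_2]=\emptyset$ would be lost. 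Worse, several cross-class edges are simply not excluded by the $(\mathbb{O})$-properties: \ref{aa1} covers only consecutive $A$-classes, so $[A_1,A_4]$ may be nonempty, and edges in $[A_2,D_4]$, $[A_4,D_4]$, $[T,D_4]$ and $[A_4,B_1]$ are all possible; any bipartition of the remainder forced by the edges $v_4v_5,v_5v_1,v_1v_2$ puts $A_1$ and $A_4$ (respectively $A_2$ and $D_4$) on the same side, so the remainder need not be bipartite. Your claim that $|B_i|\ge 3$ makes ``the neighbouring $A$'s and $D$'s disappear'' is also too optimistic: \ref{ab} only re-kills the already-empty $A_3,A_5$, while $A_1,A_2,A_4$ all survive.

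The paper's actual mechanism in the heavy case is different and is missing from your plan. It reduces to $|B_1|\ge 4$, picks $x,y,w\in B_1\sm\{b_1\}$, and splits $A_4=A_4'\cup A_4''$ according to non-adjacency to $x$ resp.\ $y$ — legitimate because $\{a,x,y,v_1\}$ cannot induce a $K_4-e$, so each $a\in A_4$ misses $x$ or $y$. It then covers everything except $M:=Z_4\cup(B_1\sm\{b_1,x,y,w\})$ by six stable sets (with $b_1,x,y,w$ placed \emph{inside} these sets, e.g.\ $\{x\}\cup A_4'\cup Z_1$ and $\{y\}\cup A_4''$), and finishes by observing that $M$ is a clique (by \ref{x}) complete to $\{v_1,v_2\}$, whence $|M|\le\omega(G)-6$ and $\chi(G)\le\chi(G[M])+\chi(G-M)\le(\omega(G)-6)+6=\omega(G)$. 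This ``six stable sets plus a residual clique peeled from $B_1\cup Z_4$'' argument, rather than your ``perfect part plus bipartite part plus two stable sets,'' is what accommodates the single-heavy-clique configuration of $F_4$; without it, the route you outline cannot close.
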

\begin{proof}
    Let $G$ be a ($P_2+P_4$, $K_4 -e$)-free graph.  Suppose that $G$ contains an $F_4$. We label such an $F_4$ as shown in Figure \ref{Gi}, and we let $C := \{v_1, v_2, v_3, v_4, v_5\}$. Then with respect to $C$, we define the sets $A$, $B$, $D$, $Z$ and $T$ as in  \cref{genprop-1}, and we use the properties in  \cref{genprop-1}. Clearly     $b_1 \in B_1$. From \cref{F1,F2,F3}, we may assume that $G$ is $(F_1, F_2, F_3)$-free, and hence it follows that $(B\sm B_1) \cup A_3 \cup A_5 = \emptyset$. By \ref{x}, $B_1\cup Z_4$ is a clique of size at most $\omega(G) - 2$.  Moreover, we claim the following.

    \begin{claim}\label{F5-D1D2}
    We may assume that $D_1\cup D_2$ is an empty set.
    \end{claim}
    \no{\it Proof of \cref{F5-D1D2}}.~Suppose, up to symmetry, that $D_1\neq \es$. Then we have $B_1 = \{b_1\}$, by \ref{b}:\ref{b2d}. Now  we define the sets
        $S_1 := \{ v_3, v_5,b_1\} \cup A_1 \cup D_5 $,
        $S_2 :=  A_2 \cup D_3 \cup Z_4$,
        $S_3 :=  \{v_2\} \cup A_4 \cup T$,
        $S_4 := \{v_1, v_4\} \cup D_1$,
        $S_5 :=  D_2$ and
        $S_6 := D_4 \cup Z_1 \cup Z_2 \cup Z_3 \cup Z_5$ so that $V(G)= S_1\cup S_2\cup \cdots \cup S_6$. Then from \ref{DZ}, \ref{xto}, \ref{adt} and \ref{b}:\ref{da}, and from above arguments, we conclude that $S_j$ is a stable set, for  each $j\in \{1,2,\ldots, 6\}$, and hence  $\chi(G) \leq 6$. So we may assume that $D_1=\es$. Likewise,  we may assume that $D_2=\es$. $\sq$

Next:

  \begin{claim}\label{F5-B1geq4}
    We may assume that $|B_1|\geq 4$.
    \end{claim}
\no{\it Proof of \cref{F5-B1geq4}}.~Suppose that $|B_1| \leq 3$. If $B_1\sm \{b_1\}\neq \es$, then we let $B_1'$ consist of one vertex of $B_1\sm \{b_1\}$, otherwise we let $B_1'=\es$, and if $B_1'\neq \es$ and $B_1\sm (B_1'\cup \{b_1\})\neq \es$, then we let $B_1''$ consist of one vertex of $B_1\sm (B_1'\cup \{b_1\})$, otherwise we let $B_1''=\es$. Then we define the sets
    $S_1 :=  \{v_1\} \cup A_4 \cup T$,
        $S_2 := \{v_4,b_1\} \cup A_1 \cup A_2 $,
        $S_3 :=  \{v_3, v_5\} \cup B_1'\cup D_3 $,
       $S_4 :=  B_1'' \cup D_5$,
        $S_4 :=  \{v_2\} \cup D_4 \cup Z_1  \cup Z_3$         and
        $S_6 :=  Z_2 \cup Z_4 \cup Z_5$ so that $V(G)= S_1\cup S_2\cup \cdots \cup S_6$, by \cref{F5-D1D2}. Then from \ref{DZ}, \ref{xto}, \ref{adt}, \ref{b}:\ref{da} and from above properties, we conclude that  $S_j$ is a stable set, for each $j\in \{1,2,\ldots, 6\}$, and hence  $\chi(G) \leq 6$, and we are done.
         So we may assume that $|B_1|\geq 4$. $\sq$

    \medskip
   By using \cref{F5-B1geq4}, we may assume that there are vertices, say $x,y$ and $w$, in $B_1\sm \{b_1\}$.
    Then for any  $a \in A_4$, since $\{a, x, y, v_1\}$ does not induce a $K_4-e$, we have either $a x \notin E(G)$ or $ay \notin E(G)$; so we let  $A_4':= \{a \in A_4\mid ax\notin E(G)\}$ and $A_4'':= \{a \in A_4\sm A_4'\mid ay \notin E(G)\}$ so that  $A_4 := A_4' \cup A_4''$.
    Now we define the sets
        $S_1 := \{v_3, v_5,b_1\} \cup A_2 \cup D_3$,
        $S_2 :=  \{w\} \cup A_1 \cup D_5$,
        $S_3 :=  \{x\} \cup A_4'\cup Z_1$,
        $S_4 :=  \{y\} \cup A_4''$,
        $S_5 :=  \{v_1\} \cup D_4 \cup Z_2 \cup Z_5$,
        $S_6 := \{v_2,v_4\}  \cup Z_3\cup T$ and
        $M := Z_4 \cup (B_1\setminus\{b_1,x,y,w\})$  so that $V(G)= S_1\cup S_2\cup \cdots \cup S_6\cup M$. Then from \ref{DZ}, \ref{xto}, \ref{adt}, \ref{b}:\ref{da}, \ref{b}:\ref{tz} and from above arguments, we conclude that $S_j$ is a stable set, for each $j\in \{1,2,\ldots, 6\}$, and hence  $\chi(G-M) \leq 6$. Also since $\{v_1,v_2\}$ is complete to $M$, and since $M$ is a clique (by \ref{x}), clearly $M$ is a clique of size at most $\omega(G)-6$ , and hence $\chi(G)\leq \chi(G[M])+\chi(G-M)\leq (\omega(G)-6)+6=\omega(G)$. This proves \cref{F5}.
    \end{proof}
\begin{lemma}\label{C5}
    Let $G$ be a ($P_2+P_4$, $K_4 -e$)-free graph. If $G$ contains a  $C_5$, then   either $\chi(G)\leq 6$ or $\chi(G)= \omega(G)$.
\end{lemma}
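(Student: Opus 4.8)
The plan is to feed the configuration into the four preceding lemmas and then settle the single remaining case by hand. Since $G$ contains a $C_5$, I fix such a cycle $C=\{v_1,\dots,v_5\}$ and set up the partition $V(G)=C\cup A\cup B\cup D\cup Z\cup T$ together with the structural properties of \cref{genprop-1}. If $G$ contains any of $F_1,F_2,F_3,F_4$, then \cref{F1,F2,F3,F5} respectively give the conclusion, so I may assume $G$ is $(F_1,F_2,F_3,F_4)$-free. The decisive reduction is that a single vertex of any $B_i$, together with $C$, already induces an $F_4$; hence $F_4$-freeness forces $B=\emptyset$. Thus everything comes down to the case $B=\emptyset$, in which $V(G)=C\cup A\cup D\cup Z\cup T$.

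Before colouring I would fix the target by bounding $\omega$. Let $K$ be a maximum clique with $K\cap C=\emptyset$. If a vertex $v_k\in C$ were adjacent to two vertices $x,y\in K$ but not to some $w\in K$, then $\{v_k,x,y,w\}$ would induce a $K_4-e$; so $v_k$ would be complete to $K$ and $K\cup\{v_k\}$ a larger clique, contradicting maximality. Hence each $v_k$ has at most one neighbour in $K$, so the number of $C$--$K$ edges is at most $5$. As an $A$-, $D$-, $Z$-, $T$-vertex meets $C$ in $1,2,3,0$ vertices respectively, and $T$ sends no edge to $A$ (by \ref{adt}), this forces $|K|\le 5$. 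The cases $|K\cap C|\in\{1,2\}$ are similar: if $|K\cap C|=2$ the common neighbour lies in some $B_i=\emptyset$, so $K=\{v_i,v_{i+1}\}$; if $K\cap C=\{v_1\}$ then $K\setminus\{v_1\}\subseteq A_1\cup D_2\cup D_5\cup Z_1\cup Z_3\cup Z_4$, and since $A_1,D_2,D_5$ are stable and $Z_1\cup Z_3\cup Z_4$ is stable (by \ref{xto}), this meets $K$ in at most four vertices. Thus $\omega(G)\le 5$, and it suffices to produce a proper $6$-colouring.

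For the colouring I would partition $V(G)$ into six stable sets in the explicit style of \cref{F1,F2}. The properties in \ref{b} are unavailable here (they presuppose a nonempty $B_i$), so the first job is to re-derive, directly from $(P_2+P_4)$- and $(K_4-e)$-freeness, exactly the adjacencies the partition needs: the complete/empty dichotomy for $[A_i,A_{i\pm1}]$ from \ref{aa1}, the strong anticompleteness of each $Z_j$ from \ref{xto}, and sufficient control of the $A$--$D$, $D$--$D$ and $T$--$(D\cup Z)$ edges. Since $\omega\le 5$ there is exactly one spare colour, so I would begin from a proper $3$-colouring of $C$ and spread $A\cup D\cup Z\cup T$ over all six colours, reusing a colour of $C$ on any attachment anticomplete to the $C$-vertices of that colour, and spending fresh colours only on the (at most five) mutually adjacent attachments clustered around one part of the pentagon.

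The hard part will be this last step. Unlike \cref{F3,F5}, where a large clique is confined to one set $B_i\cup Z_{i-2}$ and $G$ splits cleanly into a perfect part and a bipartite part, in the case $B=\emptyset$ the cliques are spread around the cycle — a maximum clique may pick one vertex near each of $v_1,\dots,v_5$ — and the $A$-graph can carry edges between parts $A_i,A_{i+2}$ at distance two, so there is no single clique to peel off. I therefore expect a short case analysis governed by which of the alternatives of \ref{aa1} hold (equivalently, how many consecutive pairs $[A_i,A_{i+1}]$ are complete); in each case the six colour classes can be written down explicitly, and the genuinely delicate but routine point is checking that they remain stable after the vertices of $Z$ and $T$ are inserted.
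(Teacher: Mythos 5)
Your opening reduction is exactly the paper's: invoke \cref{F1,F2,F3,F5} to assume $G$ is $F_4$-free, observe that any vertex of $B$ together with $C$ induces an $F_4$, and conclude $B=\emptyset$. But from that point on the proposal stops being a proof. You explicitly defer the construction of the six stable sets (``I would begin from\ldots'', ``I expect a short case analysis\ldots''), and you misdiagnose where the difficulty lies. No case analysis over the alternatives of \ref{aa1} is needed, because one can place each $A_i$ in a \emph{different} colour class, so edges among the $A_i$'s never matter. The one genuinely nontrivial structural fact the colouring needs --- and the one your sketch never supplies --- is that $[A_i, D_{i+1}]=\emptyset$ for each $i$. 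The paper gets this by a second application of $F_4$-freeness: if $a\in A_i$ and $d\in D_{i+1}$ were adjacent, then $\{d\}\cup(C\setminus\{v_{i+1}\})$ is a new $C_5$ (since $d$ is adjacent to $v_i$ and $v_{i+2}$), and $a$, being adjacent to the consecutive pair $v_i, d$ on it, yields an $F_4$. With this fact in hand the partition is immediate, e.g.\ $\{v_2,v_4\}\cup A_1\cup D_2\cup Z_3$, $A_2\cup D_3$, $A_3\cup D_4\cup Z_1$, $\{v_3,v_5\}\cup A_4\cup D_5$, $\{v_1\}\cup A_5\cup T$, $D_1\cup Z_2\cup Z_4\cup Z_5$, with stability checked from \ref{DZ}, \ref{xto} and \ref{adt} alone; you are right that \ref{b} is unavailable when $B=\emptyset$, but wrong that new control of $D$--$D$ or $T$--$(D\cup Z)$ edges is needed. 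Without the $[A_i,D_{i+1}]$ fact, your plan of ``spreading $A\cup D\cup Z\cup T$ over six colours'' has no visible route to completion, so this is a real gap, not a routine verification.

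A secondary remark: your bound $\omega(G)\le 5$ is superfluous, since exhibiting a proper $6$-colouring establishes the disjunction in the lemma regardless of $\omega(G)$, and its proof has slips as written. If $K$ is chosen maximum only among cliques disjoint from $C$, then $K\cup\{v_k\}$ being a clique contradicts nothing (it meets $C$); you need $K$ to be a maximum clique of $G$ and to split into cases by $|K\cap C|$ from the start. Also, in the case $|K\cap C|=2$ the common neighbours of $v_i,v_{i+1}$ outside $C$ lie in $B_i\cup Z_{i-2}$, not in $B_i$ alone, so $K=\{v_i,v_{i+1},z\}$ with $z\in Z_{i-2}$ is possible (harmless for the bound by \ref{DZ}, but the claim $K=\{v_i,v_{i+1}\}$ is false). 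Since the whole detour can be deleted, I would drop it and spend the effort on the missing $A$--$D$ anticompleteness and the explicit partition.
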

\begin{proof}
    Let $G$ be a ($P_2+P_4$, $K_4 -e$)-free graph.  Suppose that $G$ contains a  $C_5$. We may assume such a  $C_5$ with the vertex-set, say $C := \{v_1, v_2, v_3, v_4, v_5\}$ and with the edge-set $\{v_1 v_2, v_2 v_3, v_3 v_4, v_4 v_5, $ $v_5 v_1\}$. Then with respect to $C$, we define the sets $A$, $B$, $D$, $Z$ and $T$ as in  \cref{genprop-1}, and we use the properties in  \cref{genprop-1}. From \cref{F5}, we may assume that $G$ is $F_4$-free, and hence it follows that $B = \emptyset$. Moreover, for any $a\in A_{i}$ and $d\in D_{i+1}$, since $\{a,d\}\cup (C\sm \{v_{i+1}\})$ does not induce  an $F_4$, we see that $ad\notin E(G)$; so $[ A_{i}, D_{i+1}]=\es$, for each $i\in \langle 5 \rangle$. Then we define the sets
        $S_1 :=  \{v_2, v_4\} \cup A_1 \cup D_2 \cup Z_3$,
        $S_2 :=   A_2 \cup  D_3$,
        $S_3 :=  A_3 \cup D_4 \cup Z_1$,
        $S_4 :=  \{v_3, v_5\} \cup A_4 \cup D_5$,
        $S_5 :=  \{v_1\} \cup A_5 \cup T$ and
        $S_6 :=  D_1 \cup Z_2 \cup Z_4 \cup Z_5$ so that $V(G)= S_1\cup S_2\cup \cdots \cup S_6$. Then from \ref{DZ}, \ref{xto} and  \ref{adt}, and from above arguments, we conclude that  $S_j$ is a stable set, for each $j\in \{1,2,\ldots, 6\}$, and hence  $\chi(G) \leq 6$. This proves \cref{C5}.
        \end{proof}
  %%%%

\begin{lemma}\label{C7}
    Let $G$ be a ($P_2+P_4$, $K_4 -e$, $C_5$)-free graph. If $G$ contains a  $C_7$, then    $\chi(G)\leq 3$.
\end{lemma}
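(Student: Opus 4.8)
The plan is to reduce the statement to a clean bipartiteness criterion and then produce a transversal stable set. First I would record two global facts. Since $K_4$ contains a $K_4-e$, the graph $G$ is $K_4$-free; and since $G$ is $C_5$-free while every odd hole $C_{2t+1}$ with $t\geq 4$ contains a $P_2+P_4$ (as used in the proof of \cref{mainthm}), the only odd holes in $G$ are copies of $C_7$. Consequently, for any stable set $I$ the graph $G-I$ is bipartite if and only if $G-I$ contains neither a triangle nor an induced $C_7$: a non-bipartite graph has a minimum-length odd cycle, which is chordless (a chord would split it into a shorter odd cycle and an even cycle) and hence is a triangle or an odd hole, and the odd holes of the induced subgraph $G-I$ are odd holes of $G$, i.e. $C_7$'s. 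Therefore it suffices to exhibit a stable set $I$ meeting every triangle and every induced $C_7$ of $G$, for then $\chi(G)\leq 1+\chi(G-I)\leq 1+2=3$. This reframing is valuable because it says exactly what $I$ must do and lets me ignore all even structure.

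Next I would analyse how the rest of $G$ attaches to a fixed induced $C_7$. Write $C:=v_1v_2\cdots v_7$ and, as in \cref{genprop-1}, sort each $v\in V(G)\setminus C$ by $N(v)\cap C$. Two elementary restrictions drive the classification: since $G$ is $(K_4-e)$-free, no vertex is adjacent to three consecutive vertices of $C$; and since $G$ is $C_5$-free, no vertex is adjacent to a pair $v_i,v_{i+3}$ while missing both $v_{i+1}$ and $v_{i+2}$, since then $\{v,v_i,v_{i+1},v_{i+2},v_{i+3}\}$ would induce a $C_5$. A short case check then shows that, up to rotation and reflection, $N(v)\cap C$ lies in a very small list, the richest member being $\{v_1,v_2,v_4,v_6\}$ --- precisely the attachment realised by the vertex $x$ in the extremal graph $G^{*}$ described after \cref{cor-per}. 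To control vertices with few neighbours on $C$ I would invoke $P_2+P_4$-freeness: if a vertex $u$ is anticomplete to four consecutive cycle-vertices, then no neighbour of $u$ may be anticomplete to the same four, since an edge at $u$ together with that induced $P_4$ would form a $P_2+P_4$. In particular every vertex missing $C$ entirely attaches to a vertex whose neighbourhood ``$P_4$-dominates'' $C$, every edge between two $C$-free vertices is forbidden, and more generally every triangle and every induced $C_7$ of $G$ must sit close to $C$ (an edge of such a configuration together with a $P_4$-arc of $C$ would otherwise be a $P_2+P_4$). The same device pins down adjacencies between vertices attached to disjoint arcs of $C$.

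With this picture I would build the transversal $I$. Note first that, because the independence number of $C$ is $3$, no stable subset of $C$ can cover all seven edges of $C$, so in general $I$ must include some outside vertices to destroy the triangles seated on cycle edges; the classification is exactly what limits which cycle edges actually carry an (external) triangle and shows they are few and compatibly placed. Using this, one selects a stable subset of $C$ of size at most $3$ together with a stable set of outside vertices and verifies, class by class as in the earlier lemmas, that the union is stable and meets every triangle and every induced $C_7$ of $G$. Equivalently, one exhibits three stable classes $S_1,S_2,S_3$ with $V(G)=S_1\cup S_2\cup S_3$, guided by the proper $3$-colouring $v_1v_3v_5\mid v_2v_4v_6\mid v_7$ of $C$ and slotting each outside vertex into a class avoiding its neighbours, which the pattern list makes possible.

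The hard part will be this last step: forcing a single stable set to meet \emph{every} triangle and \emph{every} induced $C_7$ of the whole graph, not merely those meeting the fixed cycle $C$. The obstruction is triangles and edges among the outside vertices, which are only indirectly constrained; ruling out the configurations that would force a fourth colour requires the combined use of $C_5$-freeness and $P_2+P_4$-freeness, in the same spirit as the ``$[W_1,W_2]=\emptyset$'' arguments and the stability verifications in \cref{F2,F3,F5}, and is where the bulk of the casework will lie.
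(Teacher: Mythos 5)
Your opening reduction is correct: since $G$ is $C_5$-free and every odd hole $C_{2t+1}$ with $t\geq 4$ contains a $P_2+P_4$, every shortest odd cycle of an induced subgraph of $G$ is a triangle or an induced $C_7$, so it would indeed suffice to find one stable set meeting all triangles and all induced $C_7$'s. Your attachment analysis is also essentially the paper's: the constraints you list (no three consecutive neighbours on $C$ by $(K_4-e)$-freeness, no $\{v_i,v_{i+3}\}$ missing $v_{i+1},v_{i+2}$ by $C_5$-freeness, no vertex with too few neighbours by $P_2+P_4$-freeness) yield exactly the classes $Q_i$, $Y_i$, $L_i$, $X_i$ (your $\{v_1,v_2,v_4,v_6\}$ is $X_1$) and $M$ of \cref{C7-V}. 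But note that the transversal reformulation buys you nothing in the end: you yourself fall back to exhibiting three stable classes $S_1,S_2,S_3$, which is what the paper does directly, so the reframing is a detour rather than a reduction.

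The genuine gap is that the entire content of the proof is in the step you defer. After the classification, the paper needs: (a) the localization claims (\cref{xyz}) that at most one class $X_i$, one $Y_i$ and one $L_i$ can be nonempty, and the stronger alignments when $X_1\neq\emptyset$ (then $L\setminus L_6=\emptyset$, $Y\setminus Y_3=\emptyset$, $Q_1\cup Q_2=\emptyset$) or $Y_1\neq\emptyset$ (then $L_1\cup L_2\cup L_6=\emptyset$, etc.); (b) the interaction lemmas of \cref{c7props} and beyond, namely $[Q_i,Q_{i+3}\cup Q_{i-3}]=\emptyset$, $[M,Q\cup Y\cup L]=\emptyset$, $[Y_1,Q_2\cup Q_3\cup Q_5\cup Q_7]=\emptyset$, and crucially $[Q_i,Q_{i+2}]=\emptyset$, which is proved \emph{only} under the hypothesis that all seven $Q_i$ are nonempty (it uses a vertex of $Q_{i+1}$ as a pivot); and (c) four different case-dependent partitions into three stable sets. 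Your single guide colouring $v_1v_3v_5\mid v_2v_4v_6\mid v_7$ is the one the paper uses only in the very last case $V(G)=C\cup Q\cup M$ with every $Q_i\neq\emptyset$; when some $Q_i=\emptyset$ one cannot place $Q_j$ and $Q_{j+2}$ together (that adjacency is not excluded there), and the paper must instead pair $Q_j$ with $Q_{j+3}$ via classes such as $\{v_2,v_5\}\cup Q_2\cup Q_5$, while the $X\neq\emptyset$ and $Y\neq\emptyset$ cases use yet other colourings of $C$ (e.g.\ $\{v_5\}\cup Q_5\cup X_1\cup Y_3\cup L_6$ as one class). So ``slotting each outside vertex into a class avoiding its neighbours'' against a fixed colouring of $C$ is not possible; the case analysis and the lemmas (a)--(b) that legitimize each colouring are the proof, and your proposal, as you concede in the final paragraph, supplies neither. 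As written it is a plausible plan with the correct first-stage structure, not a proof of \cref{C7}.
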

\begin{proof}
Let $G$ be a ($P_2+P_4$, $K_4 -e$, $C_5$)-free graph. Suppose that $G$ contains a $C_7$ with the vertex-set, say $C := \{v_1, v_2, v_3, v_4, v_5, v_6, v_7\}$ and with the edge-set \{$v_1 v_2$, $v_2 v_3$, $v_3 v_4$, $v_4 v_5$, $v_5 v_6$, $v_6 v_7$, $v_7 v_1$\}.  Then we define the following sets for $i \in \langle  7 \rangle$.
    \begin{align*}
    Q_i &:= \{v \in V (G) \setminus C \,|\, N(v) \cap C = \{v_{i-1}, v_{i+1}\}\},\\
        X_i &:= \{v \in V (G) \setminus C \,|\, N(v) \cap C = \{v_{i}, v_{i+1}, v_{i+3}, v_{i-2}\}\},\\
    Y_i &:= \{v \in V (G) \setminus C \,|\, N(v) \cap C =    \{v_i, v_{i+1}, v_{i-2}\}\}, \\
   L_i &:= \{v \in V (G) \setminus C \,|\, N(v) \cap C =  \{v_i, v_{i+1}, v_{i+3}\}\}, \mbox{and }\\
     M &:= \{v \in V(G)\setminus C \,|\, N(v) \cap C = \emptyset\}.
    \end{align*}
Further we let $Q := \cup_{i=1}^7 Q_i$,  $X:= \cup_{i=1}^7 X_i$, $Y:= \cup_{i=1}^7 Y_i$ and $L:=\cup_{i=1}^7 L_i$. Then we claim the following.
\begin{claim} \label{C7-V}
    $V(G) = C \cup Q \cup X \cup Y \cup L  \cup M$.
\end{claim}
{\it Proof of \ref{C7-V}}.~It is enough to show that any $v\in V(G)\sm (C\cup M)$ is in $Q \cup X \cup Y \cup L$. Let $u\in  V(G)\sm (C\cup M)$.  Then $N(u) \cap C \neq \emptyset$, and say $v_i\in N(u)$ for some $i\in \langle  7 \rangle$. Then since $\{u,v_i, v_{i+2},v_{i+3}, v_{i-3},v_{i-2}\}$ does not induce a $P_2+P_4$, we have $u$ has a neighbor in $\{v_{i+2},v_{i+3},v_{i-3},v_{i-2}\}$.  Now if $v_{i+1}\in N(u)$, then since $\{v_{i-1}, v_i,v_{i+1},u\}$  or $\{v_i,v_{i+1},v_{i+2},u\}$  does not induce a $K_4-e$, we have  $v_{i-1},v_{i+2}\notin N(u)$, and then since   $\{v_{i-2}, v_{i-3}, v_{i+3}, u\}$ does not induce a $K_4-e$  or since one of $\{v_{i+1},v_{i+2},v_{i+3}, v_{i-3}, u\}$ or  $\{v_{i},v_{i-1},v_{i-2}, v_{i-3}, u\}$ does not induce a $C_5$, we may assume that $v_{i-3}\notin E(G)$, and   hence $u\in  X \cup Y \cup L$, and we are done. So  for any $i\in \langle  7 \rangle$, we may assume that if $v_i\in N(u)$, then $v_{i+1},v_{i-1}\notin N(u)$, and then since one of $\{u,v_i,v_{i-1},v_{i-2},v_{i-3}\}$, $\{u,v_i,v_{i+1},v_{i+2},v_{i+3}\}$ or  $\{u,v_{i+2},v_{i+3},v_{i-3},v_{i-2}\}$ does not induce  a $C_5$ and from the earlier argument, we see that $u\in Q$. This proves \cref{C7-V}. $\sq$

\begin{claim}\label{c7props}
For each $i \in \langle  7 \rangle$,  we have: $Q_i$, $X_i$, $Y_i$, $L_i$ and $M$ are stable sets, $[Q_i, Q_{i+3} \cup Q_{i-3}]$ is an empty set, and $[M, Q \cup Y \cup L]$ is an empty set.\end{claim}
{\it Proof of \ref{c7props}}.~Suppose there are adjacent vertices, say $u$ and $v$, in one of the mentioned sets.  If $u,v \in Q_i\cup X_i\cup Y_i\cup L_i$, then one of $\{u,v,v_{i-1},v_{i+1}\}$, $\{u,v,v_{i},v_{i+3}\}$ or $\{u,v,v_{i},v_{i-2}\}$  induces a  $K_4-e$ or $\{u,v,v_{i+3}, v_{i-3}, v_{i-2}\}$ induces a $C_5$, and  if $u,v \in M$, then $\{u,v, v_1, v_{2},v_{3}, v_{4}\}$ induces a $P_2+P_4$.
 If $u\in Q_i$ and $v \in Q_{i+3}\cup Q_{i-3}$, then one of $\{u,v_{i-1},v_{i-2}, v_{i-3}, v\}$ or $\{u,v_{i+1},v_{i+2}, v_{i+3}, v\}$ induces a $C_5$.
     If   $u\in M$ and $v \in Q_i$ (or $v\in Y_i$ or $v\in L_i$)  for some $i$, then one of $\{u, v, v_i, v_{i-1}, v_{i+2}, v_{i+3}\}$ or  $\{v_{i-1},v_{i-2}, u, v, v_{i+3}, v_{i+2}\}$  induces a $P_2+P_4$. So \cref{c7props} holds. $\sq$

     \begin{claim}\label{xyz}
      For each $i\in \langle  7 \rangle$,  we have the following: (a) If $X_i\neq \es$, then $X\sm X_i=\es$. (b) If $Y_i\neq \es$, then $Y\sm Y_i=\es$. Likewise, if $L_i\neq \es$, then $L\sm L_i=\es$.
\end{claim}
 {\it Proof of \ref{xyz}}.~We will prove for $i=1$.\\
 $(a)$:~Let $x\in X_1$.
  If there is a  vertex, say $x'\in X\sm X_1$, then we may assume, up to symmetry, that $x'\in X_2\cup X_3\cup X_4$, and then    $\{v_1,v_2,x,x'\}$, induces a $K_4-e$   or one of $\{x,v_2,x',v_5,v_6\}$ or $\{x,v_2,v_3,x',v_6\}$  induces a $C_5$; so  $X\sm X_1=\es$.   This proves $(a)$.

  \smallskip
 \no{$(b)$}:~Let $y\in Y_1$. Suppose there is a  vertex, say $y'\in Y\sm Y_1$. Now if $yy'\in E(G)$, then one of $\{v_1,v_2,y,y'\}$, $\{v_5,v_6,y,y'\}$ or $\{v_6,v_7,y,y'\}$ induces a $K_4-e$, and if $yy'\notin E(G)$, then one of $\{v_4,v_5,y,v_2,y',v_7\}$, $\{v_4,y',v_2,y,v_6,v_7\}$, $\{y,v_2,v_7,y',v_4,v_5\}$  or  $\{v_3,v_4,y,v_6,v_7,y'\}$ induces a $P_2+P_4$ or one of $\{v_2,y',v_5,$ $ v_6,y\}$ or $\{v_2,v_3,y',v_6,y\}$ induces a $C_5$. This shows that $Y\sm Y_1=\es$. Likewise, if $L_1\neq \es$, then $L\sm L_1=\es$. This proves $(b)$. $\sq$

\smallskip
Next:

    \begin{claim}\label{c7-xemp}
    We may assume that $X$ is an empty set.
    \end{claim}
  {\it Proof of \ref{c7-xemp}}.~Suppose that $X \neq \emptyset$. Then there is an index $i \in \langle  7 \rangle$  such that $X_i \neq \emptyset$, say $i=1$, and let $x\in X_1$. So from \cref{xyz}:$(a)$, we have $X\sm X_1=\es$. Also  the following hold.
  \begin{enumerate}[label=($\roman*$)]\itemsep=0pt

\item If there is a  vertex, say $z\in L\setminus L_6$, then   one of $\{v_1,v_2,x,z\}$, $\{v_1,v_4,x,z\}$, $\{v_3,v_4,x,z\}$ or $\{v_4,v_5,x,z\}$ induces a $K_4-e$  or one of $\{x,v_2,z,v_5,v_6\}$, $\{x,v_6,z,v_3,v_2\}$, $\{x,v_4,z,v_7,v_1\}$, $\{x,v_4, v_5, z,$ $v_1\}$ or
    $\{x,v_1,z,v_3,v_4\}$ induces a $C_5$; so  $L\sm L_6=\es$. Likewise, $Y\sm Y_3=\es$.

   \item   If there is a  vertex, say $q\in Q_1 \cup Q_2$, then $\{v_1, v_2, q, x\}$ induces $K_4 - e$ or one of $\{q,v_2, x, v_6, v_7\}$ or $\{q,v_1, x, v_4, v_3\}$ induces a $C_5$; so $Q_1 \cup Q_2=\es$.
   \item If there are adjacent vertices, say $u, v\in Q_5\cup X_1\cup Y_3\cup L_6$, then using \cref{c7props}, we see that one of  $\{u,v, v_4, v_6\}$,
   $\{u,v,v_3,v_4\}$ or $\{u,v,v_6,v_7\}$ induces a $K_4 - e$ or $\{u,v,v_4,v_5,v_6\}$ induces a $C_5$; so $Q_5\cup X_1\cup Y_3\cup L_6$ is a stable set.

 \end{enumerate}
From above arguments, we conclude that $V(G) = C \cup Q_3 \cup Q_4 \cup Q_5 \cup Q_6 \cup Q_7\cup X_1 \cup Y_3 \cup L_6 \cup M$.
      Now we define the sets
        $S_1 :=  \{v_5\} \cup Q_5 \cup X_1 \cup Y_3 \cup L_6 $,
        $S_2 :=   \{v_2, v_4, v_7\} \cup Q_4 \cup Q_7 \cup M$  and
        $S_3 :=  \{v_1, v_3, v_6\} \cup Q_3 \cup Q_6$ so that $V(G)= S_1\cup S_2 \cup S_3$.
        Then from \cref{c7props} and  $(iii)$, $S_j$ is a stable set for each $j\in \{1,2,3\}$, and so $\chi(G) \leq 3$. Hence we may assume that $X = \emptyset$. $\sq$

\smallskip
Next:
          \begin{claim}\label{c7-yemp}
    We may assume that $Y$ is an empty set. Likewise,   we may assume that $L$ is an empty set.
    \end{claim}
  {\it Proof of \ref{c7-yemp}}.~Suppose that $Y \neq \emptyset$. Then there is an index $i \in \langle  7 \rangle$  such that $Y_i \neq \emptyset$, say $i=1$, and let $y\in Y_1$. So from \cref{xyz}:$(b)$, we have $Y\sm Y_1=\es$. Also  the following hold.
  \begin{enumerate}[label=($\roman*$)]\itemsep=0pt
      \item\label{c7y-d14}  If there is a  vertex, say $q\in Q_1$, then $\{v_1, v_2, q, y\}$ induces a $K_4 - e$ or  $\{q,v_2, y, v_6, v_7\}$ induces a $C_5$; so $Q_1=\es$. Likewise, if $L_3\neq \es$, then $Q_4=\es$.

     \item\label{c7y-z126} If there is a vertex, say $z\in L_1 \cup L_2 \cup L_6$, then   $\{y, z, v_1, v_2\}$  induces a $K_4 - e$ or one of $\{y, z, v_4,v_5, v_6\}$ or $\{y, v_2, z, v_5, v_6\}$ induces a $C_5$  or $\{ v_4, v_5,y, v_1, v_7, z\}$ induces a $P_2 + P_4$; so $L_1 \cup L_2 \cup L_6=\es$.
     \item\label{c7y-yd} If there are adjacent vertices, say $u \in Y_1$ and $v\in Q_2 \cup Q_3 \cup Q_5 \cup Q_7$, then  $\{v_1, v_2, u,v\}$ induces a $K_4 - e$ or $\{u, v_2, v_3, v_4, v\}$ induces a $C_5$; so $[Y_1, Q_2 \cup Q_3 \cup Q_5 \cup Q_7] = \emptyset$.

  \item\label{c7y-zd} For any $i\in \langle  7 \rangle$  and for any $z \in L_{i+1}$ and $q\in Q_i\cup Q_{i+3}$, since $\{v_{i+1},v_{i+2},z,q\}$ does not induce a $K_4 - e$, we have $[L_{i+1}, Q_i\cup Q_{i+3}]=\es$. Also, for any $p\in L_7$ and $y'\in Y_1$, since $\{v_1,v_2,y',p\}$ does not induce a $K_4-e$, we have $[L_7,Y_1]=\es$, and hence for any $p'\in L_7$ and
      $q'\in Q_5$, since $\{v_1, y, v_6, q', p'\}$ does not induce a  $C_5$ (by using \ref{c7y-yd}), we see that $[Q_5,L_7]=\es$.
     \end{enumerate}
     Now if $L_3=\es$, then we define the sets  $S_1 := \{v_5\} \cup Q_2 \cup Q_5 \cup Y_1 \cup L_7$,
        $S_2 :=   \{v_1, v_3, v_6\} \cup Q_3 \cup Q_6 \cup L_4$  and
        $S_3 :=  \{v_2, v_4, v_7\} \cup Q_4 \cup Q_7 \cup L_5\cup M$, and if  $L_3\neq\es$ (so $L\sm L_3=\es$ (by \cref{xyz}), and $Q_1\cup Q_4=\es$ (by \ref{c7y-d14})), then we define the sets    $S_1 := \{v_3, v_5, v_7\} \cup Q_3 \cup Q_7 \cup Y_1$,
        $S_2 :=   \{v_2\} \cup Q_2 \cup Q_5 \cup L_3$, and
        $S_3 :=  \{v_1, v_4, v_6\} \cup Q_6\cup M$ so that  $V(G)= S_1\cup S_2 \cup S_3$.
        Then from \cref{c7props},  \ref{c7y-yd} and \ref{c7y-zd}, we conclude that in both cases, $S_j$ is a stable set for each $j\in \{1,2,3\}$, and so $\chi(G) \leq 3$. Hence we may assume that $Y = \emptyset$. Likewise,   we may assume that $L=\es$. $\sq$

        \medskip
         From \cref{c7-xemp} and \cref{c7-yemp}, we see that $V(G) = C \cup Q\cup M$. If $Q_i = \emptyset$, for some $i\in \langle  7 \rangle$, say $i=1$, then we define the sets  $S_1 := \{v_1, v_3, v_6\} \cup Q_3 \cup Q_6$,
    $S_2 :=   \{v_2, v_5\} \cup Q_2 \cup Q_5$  and
    $S_3 :=  \{v_4, v_7\} \cup Q_4 \cup Q_7\cup M$, and then from \ref{c7props},  $S_j$ is a stable set for  each $j\in \{1,2,3\}$ and so $\chi(G)\leq 3$. So we may assume that
      $Q_i \neq \emptyset$, for each $i \in \langle  7 \rangle$. Then  for any $p\in Q_i$, $q\in Q_{i+1}$ and $r\in Q_{i+2}$, since  $\{v_{i-2},v_{i-3}, v_{i+1},v_{i}, p,q,r\}$ does not induce  a $P_2 + P_4$, we have $pq, qr\in E(G)$, and then since $\{v_{i+1},p,q,r\}$ does not induce a $K_4-e$, we have $pr\notin E(G)$; so $[Q_i, Q_{i+2}]=\es$ for each $i\in \langle  7 \rangle$.  Now we define the sets
    $S_1 := \{v_1, v_3, v_5\} \cup Q_1 \cup Q_3 \cup Q_5$,
    $S_2 :=   \{v_2, v_4, v_6\} \cup Q_2 \cup Q_4 \cup Q_6$, and
    $S_3 :=  \{v_7\} \cup Q_7\cup M$.
    Then from \ref{c7props} and from above arguments, we see that $S_j$ is a stable set  for each $j\in \{1,2,3\}$, and so $\chi(G) \leq 3$. This proves \cref{C7}.
     \end{proof}

      \medskip
      \no{\bf Proof of \cref{main-c5c7}}.
 Let $G$ be a ($P_2+P_4$, $K_4 -e$)-free graph which is not a perfect graph. Since an odd hole $C_{2t+1}$, where $t\geq 4$, contains a $P_2+P_4$, and since an odd antihole $\overline{C_{2p+1}}$, where $p\geq 3$, contains a $K_4-e$, by the `Strong perfect graph theorem' \cite{spgt}, we may assume that $G$ contains a $C_5$ or a $C_7$. Now if $G$ contains a $C_5$, then the proof follows from \cref{C5}. So we may assume that $G$ is $C_5$-free and contains a $C_7$. Then the proof follows from \cref{C7}. This proves \cref{main-c5c7}. \hfill{$\Box$}

\section{Proof of \cref{mainthm-2}}\label{sec:w4}
In this section, we present a proof of \cref{mainthm-2}. We split the proof into two parts based on whether our graph contains an $F$ or not (see \cref{fig} for the graph $F$), and they are given in \Cref{sec:F2,sec:F2-free} respectively. First we prove some useful  lemmas which we use often in the subsequent sections.

 \begin{figure}[h]
\centering
 \includegraphics[width=15cm]{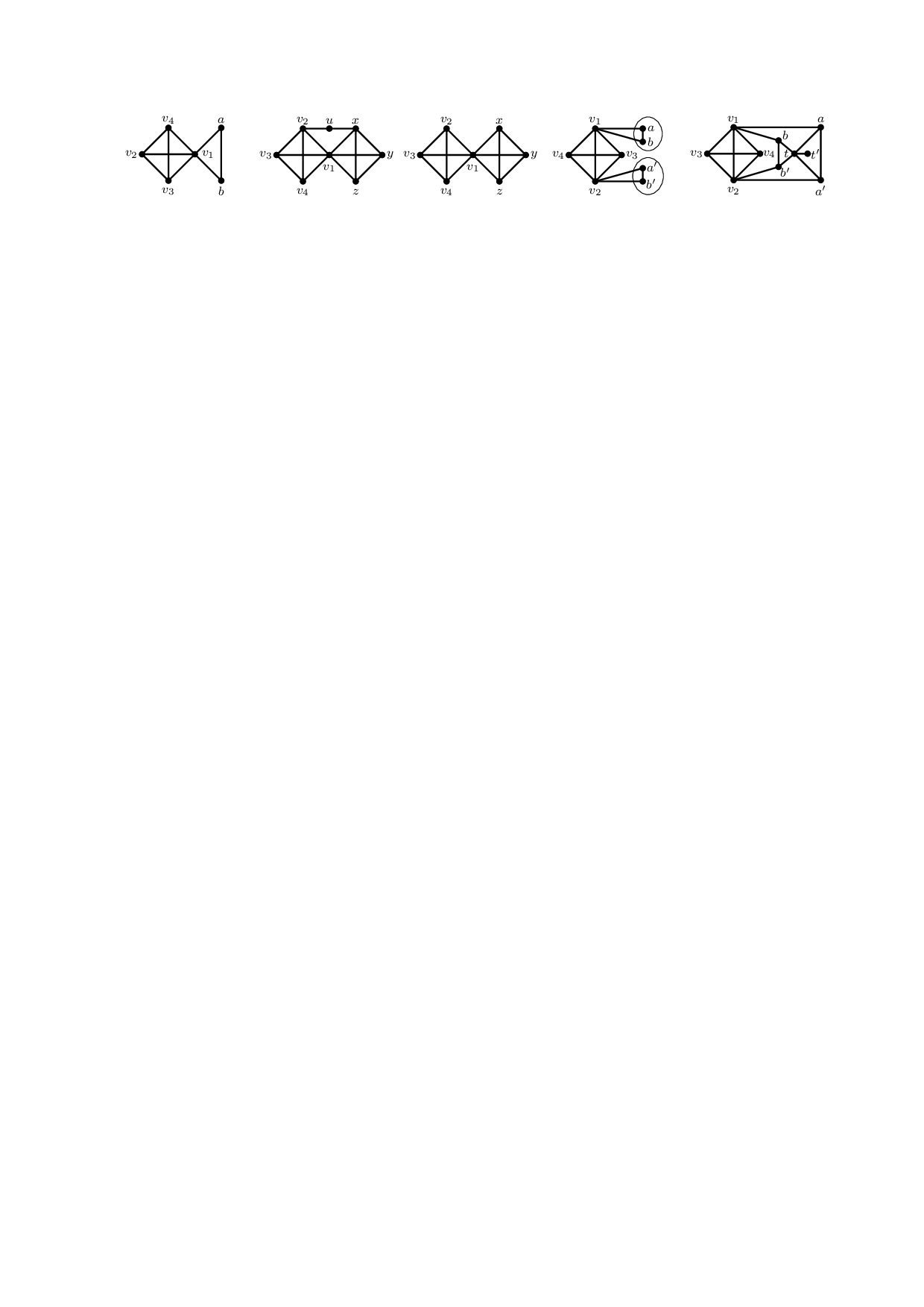}
\caption{Graphs $F$, $H_1'$, $H_1$, $H_2$ and $H_3$ (left to right).   In $H_2$, the edges between $\{a,b\}$ and $\{a',b'\}$ are arbitrary. }\label{fig}
\end{figure}

 Let $G$ be a bad connected ($P_2+P_4$, $K_4-e$)-free graph with $\omega(G) = 4$. Suppose that $G$ contains a $K_4$   with vertex-set, say $C := \{v_1, v_2, v_3, v_4\}$. Since $G$ is $(K_4 - e, K_5)$-free, every vertex in $ V(G)\setminus C$ has at most one neighbor in $C$. For each $i \in \{1,2,3,4\}$, we let $A_i := \{ u \in V(G)\setminus C \mid N(u) \cap C = \{v_i\} \}$, $A := \cup_{i=1}^4 A_i$, and let $T := \{u \in V(G)\setminus C\mid N(u) \cap C = \emptyset\}$; so $V(G)=C\cup A\cup T$. Since $G$ is bad, we have $deg(u)\geq 4$ for each $u\in V(G)$, and hence we may assume that each $A_i$ is nonempty (otherwise $deg(v_i) = 3$ and hence $G$ is a  good  graph), and we let  $a_i^*\in A_i$, $i\in  \{1,2,3,4\}$. Moreover, the sets $A$ and $T$ satisfy the following properties given in \Cref{lem:A,lemmat} below.

\begin{lemma}\label{lem:A}
For  $i \in \{1,2,3,4\}$, the following hold:
\begin{enumerate}[label=  ($\mathbb{M}\arabic*$), leftmargin=1.05cm] \itemsep=0pt
\item \label{lemAi}
Each $A_i$ is a disjoint union of cliques of size at most $3$.
\item \label{F1-obs}
Suppose that $A_i$ contains a clique of size at least 2, say $K$. Then any vertex in $V(G)\setminus (A_i\cup C)$ has  at most one neighbor in $K$.  %Furthermore, if $A_j$ ($j \in \{1,2,3,4\}$ and $j\neq i$) contains a clique of size at least 2, say $K'$, then any vertex in $K$ is adjacent to at most one vertex in $K'$ and vice-versa.
\item \label{lemAij}
For $j \in \{1,2,3,4\}$ and $j\neq i$, $G[A_i \cup A_j\cup T]$ is $P_4$-free and hence perfect.
\item\label{lem-cru} For $j \in \{1,2,3,4\}$ and $j\neq i$, suppose that there are vertices, say $a,b\in A_i$ and $a',b'\in A_j$ such that $aa', bb'\in E(G)$ and $ab,a'b',ab',ba'\notin E(G)$. If $t\in T$ has a neighbor in $\{a,b,a',b'\}$, then $t$ is complete to $\{a,b,a',b'\}$.
\end{enumerate}
\end{lemma}
 \begin{proof}\ref{lemAi}:~Since $G$ is ($K_4-e, K_5$)-free and since each $A_i$ is complete to $\{v_i\}$,   each $A_i$  induces a $(P_3, K_4)$-free graph, and so it is a disjoint union of cliques of size at most $3$, by \ref{p3-free}. $\sq$

 \smallskip
\noindent{\ref{F1-obs}}:~Otherwise, for any $p\in V(G)\setminus (A_i\cup C)$,   $\{p,v_i\}\cup K$  induces a $K_4-e$. $\sq$

\smallskip
\noindent{\ref{lemAij}}:~Since  $G[A_i \cup A_j \cup T \cup (C\setminus \{v_i,v_j\})]$ does not induce a $P_2+P_4$,  clearly $G[A_i \cup A_j\cup T]$ is $P_4$-free and hence perfect (by \ref{p4-free}). $\sq$

\smallskip
\noindent{\ref{lem-cru}}:~We prove this when $i=1$ and $j=2$. Suppose, up to symmetry, that $ta\in E(G)$. If $tb\notin E(G)$, then  $\{t, a, b,b',v_2,v_3\}$ or $\{v_3,v_4,b, b', t,a\}$ induces a $P_2+P_4$; so $tb\in E(G)$. But then since $\{v_3,v_4,a, t, b, a',b'\}$ does not induce a $P_2+P_4$, we conclude that $t$ is complete to $\{a,a',b,b'\}$. This proves \ref{lem-cru}.
\end{proof}

\begin{lemma}\label{lemmat}
	$G[T]$ is isomorphic to $K_2$ or $T$ is a  stable set.
\end{lemma}
\begin{proof}By \ref{p3-free}, it is enough to show that $G[T]$ is ($K_3, P_3,  K_1+K_2$)-free, and is shown below.
\begin{claim}\label{tk3}
		$G[T]$ is $K_3$-free.
	\end{claim}
	\noindent{\it Proof of \cref{tk3}}.~Suppose to the contrary that $G[T]$ contains a $K_3$, say with vertex-set $K:=  \{x, y, z\}$. Then we claim that each $a_i^*$ has a neighbor in $K$, $i\in  \{1,2,3,4\}$.   Suppose there is an index $i\in  \{1,2,3,4\}$, say $i=1$, such that $a_1^*$ is anticomplete to $K$. Then
since $\{x,y,z,a_1^*, a_2^*, v_1,v_2,v_3\}$ does not induce a $P_2+P_4$, $a_2^*$ is complete to $K$, by \cref{obs1}(i). Likewise, $a_3^*$ is also complete to $K$. Thus $\{a_2^*, a_3^*\}$ is complete to $K$ which is a contradiction to  \cref{obs1}(ii); so each $a_i^*$ has a neighbor in $K$.
Then  again by using \cref{obs1}, we may assume that there is an index $j\in  \{1,2,3,4\}$ such that $|N(a_j^*)\cap K| = 1 = |N(a_{j+1}^*)\cap K|$, say $j=1$. Now if $N(a_1^*)\cap K =  N(a_{2}^*)\cap K=\{x\}$ (say), then  $\{y,z, a_1^*, a_2^*, v_1,v_2,v_3\}$ induces a $P_2+P_4$, and if $N(a_1^*)\cap K = \{x\}$ and $N(a_{2}^*)\cap K=\{y\}$ (say), then  $\{v_3,v_4,a_1^*,x, y, a_2^* \}$ or $\{v_3,v_4, a_1^*, a_2^*, y, z\}$ induces a $P_2+P_4$ which are contradictions. So \cref{tk3} holds. $\sq$

	\begin{claim}\label{tp3}
		$G[T]$ is $P_3$-free.
	\end{claim}
	\noindent{\it Proof of \cref{tp3}}.~Suppose to the contrary that $G[T]$ contains a $P_3$, say with vertex-set $\{x,y,z\}$ and edge-set $\{xz,zy\}$. Then since $x$ and $y$ are not comparable vertices in $G$,  there are vertices, say $x'$ and $y'$, such that $xx', yy'\in E(G)$ and $xy', yx'\notin E(G)$. Now if $x'\in T$, then from \cref{tk3}, we have $zx'\notin E(G)$  and then $\{v_1, v_2, x', x, z, y\}$ induces a $P_2 + P_4$; so $x'\notin T$. Likewise, $y'\notin T$. Thus $\{x', y'\} \subset A$, and let $x'\in A_1$. If $y'\in A_2$,  then $\{v_{3}, v_{4}, x, x', y', y\}$  or $\{x,x', y, y', v_2, v_{3}\}$ induces a $P_2+P_4$; so $y'\notin A_2$. Likewise, $y'\notin A_3\cup A_4$. Hence, $y'\in A_1$. Then since   $\{v_2, v_3, x,x',y',y\}$, $\{v_2, v_3, x',x,z,y\}$ and $\{v_2, v_3, y',y,z,x\}$ do not induce   $P_2 + P_4$'s, we have $x'y'\notin E(G)$ and $x'z,y'z\in E(G)$. Then since $\{y,y', a_2^*, a_3^*, v_2,v_3,v_4\}$ does not induce a $P_2 + P_4$, we may assume that $a_2^*$ has a neighbor in $\{y,y'\}$.
 Then since $\{x,x', y,y', a_2^*, v_2,v_3\}$ does not induce a $P_2 + P_4$, we see that  $a_2^*$ has a neighbor in $\{x,x'\}$, and then since
 $\{x,x', z, y,y', a_2^*\}$ does not induce a $K_4-e$, we have $a_2^*z\notin E(G)$. But then by using \cref{obs1}:(i), we conclude that $\{v_3,v_4, x,x',   y,y', a_2^*\}$ induces a $P_2 + P_4$ which is a contradiction.   So \cref{tp3} holds. $\sq$

\begin{claim}\label{t2k2}
		$G[T]$ is $2K_2$-free.
	\end{claim}
	\noindent{\it Proof of \cref{t2k2}}.~Suppose to the contrary that $G[T]$ contains a $2K_2$, say with vertex-set $\{x,x', y,y'\}$  where $xx', yy'\in E(G)$. We first observe that  if there exists an $i \in \{1,2,3,4\}$ such that $N(a_{i}^*) \cap \{x,x',y,y'\} \neq \emptyset$, then $a_i^*$ is complete to $\{x,x',y,y'\}$. (Otherwise, say $i=1$ and then $\{x,x', y,y',a_1^*,v_1,v_2\}$ or $\{v_3,v_4,a_1^*,x,x',$ $ y,y'\}$  induces a $P_2 + P_4$). Now since $\{x,x', a_1^*, a_2^*, v_1,v_2,v_3\}$ does not induce a $P_2 + P_4$, we may assume that $a_1^*$ has a neighbor in $\{x,x'\}$, and hence $a_1^*$ is complete to $\{x,x',y,y'\}$. Likewise,  since $\{x,x', a_2^*, a_3^*, v_2,v_3,v_4\}$ does not induce a $P_2 + P_4$, we may assume that $a_2^*$ has a neighbor in $\{x,x'\}$, and hence $a_2^*$ is complete to $\{x,x',y,y'\}$. But then $\{a_1^*,x,x',a_2^*\}$ or $\{a_1^*,x,y,a_2^*\}$ induces a $K_4-e$ which is a contradiction. So  \ref{t2k2} holds. $\sq$
	
	\begin{claim}\label{tk2uk1}
		$G[T]$ is ($K_1 + K_2$)-free.
	\end{claim}
	\noindent{\it Proof of \cref{tk2uk1}}.~Suppose to the contrary that $G[T]$ contains a $K_1+K_2$, say with vertex-set $\{t,x,x'\}$  where $xx'\in E(G)$. Then the following hold:
   \begin{enumerate}[label=($\roman*$),leftmargin=0.9cm]\itemsep=0pt
   \item\label{tk2uk1-1} If $a\in N(t)\cap A_i$, then $a$ is complete to $\{x,x'\}$; otherwise, say $i=1$ and then $\{x,x',t,a,v_1,v_2\}$ or $\{v_3,v_4,t,a,x,x'\}$  induces a $P_2 + P_4$.
   \item\label{tk2uk1-2} Since $G$ is connected and from \ref{t2k2},  $t$ has a neighbor in $A_i$ for some $i$, say $i=1$, and let $a_1\in N(t)\cap A_1$. Then   $a_1$ is complete to $\{t,x,x'\}$, by \ref{tk2uk1-1}.

\item\label{tk2uk1-3} for each $j\neq 1$, we have $a_j^*t\notin E(G)$; otherwise,   $\{a_1,a_j^*,t,x,x'\}$ induces a $K_4-e$, by \ref{tk2uk1-1} and \ref{tk2uk1-2}.

\item\label{tk2uk1-4} Since $\{x,x',a_2^*,a_3^*,v_2,v_3,v_4\}$ does not induce a  $P_2 + P_4$, we may assume that $a_2^*$ has a neighbor in $\{x,x'\}$, and then by \ref{tk2uk1-3}, since  $\{v_3,v_4,t,a_1,a_2^*,x,x'\}$ does not induce a  $P_2 + P_4$ or $K_4-e$, we see that $a_2^*$ is complete to $\{x,x'\}$.  Also since $\{x,x',a_3^*,a_4^*,v_3,v_4,v_1\}$ does not induce a  $P_2 + P_4$, we may assume that $a_3^*$ has a neighbor in $\{x,x'\}$, and then as in the previous argument, we see that $a_3^*$ is complete to $\{x,x'\}$.
       \end{enumerate}
Thus from \ref{tk2uk1-2} and \ref{tk2uk1-4}, we conclude that $\{a_1,a_2^*,a_3^*\}$ is complete to $\{x,x'\}$, and so $\{a_1,a_2^*,a_3^*,x,x'\}$ induces a $K_5$ or $K_4-e$ which is a contradiction. So \cref{tk2uk1} holds. $\sq$

  Now the lemma follows from \cref{tk3}, \cref{tp3} and from \cref{tk2uk1}.
\end{proof}

{\it From now on, whenever our bad graph $G$ contains  a $K_4$   say with vertex-set $\{v_1, v_2, v_3, v_4\}$, then we define the sets $C, A$ and $T$ as in the beginning of this section and we use   \Cref{lem:A,lemmat}.}

\medskip
Next, we prove that if our  graph $G$ contains an $H_1$, then $\chi(G)=4$. First we prove the following.

\begin{lemma}\label{lemF11}
 If $G$ contains an  $H_1'$,	then $\chi(G)= 4$.
\end{lemma}
\begin{proof}
Suppose that $G$ contains an $H_1'$. We  label such an $H_1'$ as shown in Figure~\ref{fig}.  Clearly  $\{x,y,z\} \subset A_1$ and $u\in A_2$. Then we claim the following.
 \begin{claim}\label{a1}
$A_1\setminus \{x,y\}$ is a stable set.
\end{claim}
{\it Proof of \cref{a1}}.~Suppose to the contrary that there are adjacent vertices, say $a$ and $b$, in $A_1\setminus \{x,y\}$. Then from \ref{lemAi}, we have $z\notin \{a,b\}$ and that  $\{a,b\}$ is anticomplete to $\{x,y,z\}$. Now since $\{a,b,x,u,v_2,v_3\}$ does not induce a $P_2 + P_4$, we may assume that $au\in E(G)$, and then  $\{v_3,v_4,a,u,x,y\}$ induces a $P_2 + P_4$ which is a contradiction.
 So \cref{a1} holds.  $\sq$

 \begin{claim}\label{xA3}
 $A_3$ is anticomplete to  $x$.
\end{claim}
 {\it Proof of \cref{xA3}}.~For any $a\in A_3$,  if $ax\in E(G)$, then by \ref{F1-obs}, $ay,az\notin E(G)$, and then $\{y,z,u,v_2,v_3,v_4,a\}$ induces a $P_2 + P_4$; $\{x\}$ is anticomplete to $A_3$. This proves \cref{xA3}.  $\sq$

\begin{claim}\label{Ajind}
 For $j\neq 1$, $A_j$ is a stable set.
\end{claim}
{\it Proof of \cref{Ajind}}.~First if $N(u)\cap A_2\neq \es$, say $w\in N(u)\cap A_2$, then by using \ref{F1-obs}, we see that $xw\notin E(G)$ and that $w$ has at most one neighbor in $\{y,z\}$, but then $\{v_3,v_4,w,u,x,y,z\}$ induces a $P_2 + P_4$; so we may assume that $N(u)\cap A_2= \es$. Now suppose to the contrary that there are adjacent vertices, say $a$ and $b$, in $A_2$.  Then $ua, ub\notin E(G)$, and then  since $\{a,b,u,x,v_1,v_4\}$ does not induce a $P_2 + P_4$, from \ref{F1-obs},    $x$ has at most one neighbor in $\{a,b\}$.
But then $\{v_3,v_4,u,x,a,b\}$ induces a $P_2 + P_4$ which is a contradiction.  So $A_2$ is a stable set.

 Next suppose to the contrary that there are adjacent vertices, say $a'$ and $b'$, in $A_3$. Recall that  $a'x,b'x\notin E(G)$, by \cref{xA3}. Then since $\{a',b',u,x,v_1,v_4\}$ does not induce a $P_2 + P_4$, by using \ref{F1-obs}, we may assume that $ua'\in E(G)$ and $ub'\notin E(G)$. Again since $\{y,z,u,v_2,v_3, b'\}$
does not induce a $P_2 + P_4$, we may assume that $b'y\in E(G)$ and $b'z\notin E(G)$, by \ref{F1-obs}. But then by \ref{F1-obs}, we have $a'y\notin E(G)$ and then $\{v_2, v_4, a',b',y,x\}$ induces a $P_2 + P_4$ which is a contradiction. So  $A_3$ is a stable set. Likewise, $A_4$ is a stable set. This proves  \cref{Ajind}. $\sq$

 \begin{claim}\label{F1TA2}
   $A_2\cup\{y\}\cup  T$ is a stable set.
 \end{claim}
{\it Proof of \cref{F1TA2}}.~%Suppose to the contrary that there are adjacent vertices, say $p$ and $q$, in $A_2\cup\{y\}\cup T$.
Let $t\in T$ and $a\in A_2$ be arbitrary. Now, if $ay\in E(G)$, then $a\neq u$ and then by \ref{F1-obs} and \cref{Ajind}, we see that $\{v_3,v_4,a,y,x,u\}$ induces a $P_2 + P_4$; so  $A_2$ is anticomplete to $\{y\}$. Likewise,   $A_2$ is anticomplete to $\{z\}$.  Next, if $ty\in E(G)$, then $\{v_3,v_4,t,y,x,u\}$ or $\{v_3,v_4,t,u,x,z\}$ induces a $P_2 + P_4$, by \ref{F1-obs};  so $T$ is anticomplete to $\{y\}$. Likewise, $T$ is anticomplete to $\{z\}$. Also if $at\in E(G)$, then   $\{y,z,t,a,v_2,v_3\}$ induces a $P_2 + P_4$; so $T$ is anticomplete to $A_2$.  Thus the sets $\{y\}$, $T$ and $A_2$ are  anticomplete to each other. Next if there are adjacent vertices, say $t$ and $t'$, in $T$, then by  the first assertion,  $\{t,t',y,v_1,v_2,u\}$ induces a $P_2 + P_4$; so $T$ is a stable set. Now the claim follows from  \cref{Ajind}.  $\sq$

 \smallskip

Now we define the  sets $S_1:= \{v_1\} \cup A_4$, $S_2:= \{v_2\} \cup (A_1\setminus \{x,y\})$, $S_3:= \{v_3, y\} \cup A_2 \cup T$ and $S_4:= \{v_4, x\} \cup A_3$ so that $V(G) = S_1 \cup S_2 \cup S_3 \cup S_4$. Then by  above claims, we conclude that $S_{\ell}$ is a stable set for each $\ell\in \{1,2,3,4\}$ and hence $\chi(G) = 4$.
\end{proof}

\begin{lemma}
  \label{ai3}
	If $G$ contains an  $H_1$,	then $\chi(G)=4$.
\end{lemma}
\begin{proof}
Suppose that $G$ contains an $H_1$. We label such an $H_1$ as shown in Figure~\ref{fig}. Clearly  $\{x,y,z\} \subset A_1$.  By \cref{lemF11}, we may assume that $G$ is $H_1'$-free. Moreover, we claim the following.

 \begin{claim}\label{F1'-xyz-A234}
 $A_2 \cup A_3 \cup A_4$ is anticomplete to $\{x,y,z\}$.
 \end{claim}
 {\it Proof of \cref{F1'-xyz-A234}}.~Suppose to the contrary and  up to symmetry  that there is a vertex, say $a\in A_2$, such that $ax\in E(G)$. Then by \ref{F1-obs}, we have $ay,az\notin E(G)$ and then   $\{v_1,v_2,v_3,v_4,x,y,z,a\}$ induces an $H_1'$ which is a contradiction. So \cref{F1'-xyz-A234} holds. $\sq$

\begin{claim} \label{aiaj}
	For $i, j\in \{1,2,3,4\}$ and $i\neq j$, $A_i$ is anticomplete to $A_j$.
\end{claim}
{\it Proof of \cref{aiaj}}.~From \cref{F1'-xyz-A234} and by using symmetry, it is enough to show that $A_2$ is anticomplete to $(A_1\setminus \{x,y,z\})\cup A_3$. Now if there are adjacent vertices, say $a\in A_2$ and $b\in (A_1\setminus \{x,y,z\})\cup A_3$, then from \ref{lemAi} and from \cref{F1'-xyz-A234}, we see that $\{x,y,b,a,v_2,v_3\}$ or  $\{x,y,a,b,v_3,v_4\}$ induces a $P_2 + P_4$ which is a contradiction; so \cref{aiaj} holds. $\sq$

\begin{claim} \label{tai}
$T$ is anticomplete to $ A_2 \cup A_3 \cup A_4$, and $T$ is a stable set.
\end{claim}
{\it Proof of \cref{tai}}.~Suppose to the contrary and up to symmetry  that there are vertices, say $a\in A_2$ and $t\in T$, such that $at\in E(G)$.   Then from \ref{F1-obs} and \cref{F1'-xyz-A234}, we conclude that  $\{x,y,z,t,a, v_2, v_3\}$ induces a $P_2 + P_4$ which is a contradiction; so $T$ is anticomplete to $ A_2 \cup A_3 \cup A_4$. Next if $uv$ is an edge in $G[T]$, then by the first assertion,
$\{u,v,a_2^*,a_3^*,v_2,v_3,v_4\}$ induces a $P_2 + P_4$; so $T$ is a stable set. This proves \cref{tai}. $\sq$

\smallskip

 Now we give a $4$-coloring of $G$ using colors from the set $\{1,2, 3, 4\}$  as  follows:
\begin{itemize}\itemsep=0pt
\item We color $v_1, v_2, v_3$ and  $v_4$ with colors $1, 2, 3$ and $4$ respectively.
 \item By using \ref{lemAi} and \cref{aiaj}, we color the vertices in $A_1$ using the colors from $\{2, 3, 4\}$,
we color the vertices in $A_2$ using the colors from $\{1, 3, 4\}$,
  we color  the vertices in $A_3$ using the colors from  $\{1, 2, 4\}$, and we color  the vertices in $A_4$ using the colors from $\{1, 2, 3\}$.
    \item By using \ref{tai}, we color  the vertices in $T$ with color $1$.
\end{itemize}
Hence $\chi(G) =4$. This proves \cref{ai3}.
\end{proof}

\subsection{$G$ contains an $F$}\label{sec:F2}
In this section, we show that if $G$  contains an $F$, then  $\chi(G) = 4$. First we prove the following intermediate lemma.

\begin{lemma}\label{ai22}
	If $G$ contains an  $H_2$,	then $\chi(G) = 4$.
\end{lemma}
\begin{proof}
Suppose $G$ contains an $H_2$. We label such an $H_2$ as shown in Figure~\ref{fig}.  Clearly  $a,b\in A_1$ and $a', b' \in A_2$. We may assume that $G$ is $H_1$-free, by \cref{ai3}.  Further we claim the following.
\begin{claim}\label{F21-Aij}
For $i\neq j$, $G[A_i \cup A_j]$ is ($P_4, K_3$)-free and hence $\chi(G[A_i \cup A_j\cup (C\setminus \{v_i,v_j\})])= 2$.
\end{claim}
{\it Proof of \cref{F21-Aij}}.~By \ref{lemAij}, $G[A_i \cup A_j\cup T]$ is   perfect, for $i\neq j$. Moreover, since $G$ is $H_1$-free, we have $\omega(G[A_i])\leq 2$ for each $i$, and so by \ref{F1-obs}, $G[A_i \cup A_j]$ is $K_3$-free, for $i\neq j$.  Thus  for $i\neq j$,  $G[A_i \cup A_j]$ is a perfect $K_3$-free graph and so   $\chi(G[A_i \cup A_j])\leq 2$. Then since $(C\setminus \{v_i,v_j\})$ is anticomplete to $A_i \cup A_j$, we see that  $\chi(G[A_i \cup A_j\cup (C\setminus \{v_i,v_j\})])= 2$. This proves \cref{F21-Aij}. $\sq$

\begin{claim}\label{F21-K32}
There is no $K_3$ in $G[A_1 \cup A_2 \cup T]$ which has  two vertices from $A_1 \cup A_2$.
\end{claim}
{\it Proof of \cref{F21-K32}}.~Suppose to the contrary that there is a $K_3$ in $G[A_1 \cup A_2 \cup T]$ with vertices, say  $x, y$ and $z$, which has  two vertices from $A_1 \cup A_2$, say $x$ and $y$. Then by \cref{F21-Aij}, \ref{lemAi}, \ref{F1-obs} and by symmetry,  we have two cases: $(i)$~$x=a$, $y=a'$ and $z\in T$, and $(ii)$~$x\in A_1$, $y\in A_2\setminus \{a',b'\}$ and $z\in T$.

First suppose that $x=a$, $y=a'$ and $z\in T$. Then since $\{v_3, v_4, b, x, y, b'\}$ does not induce a $P_2 + P_4$, we have $bb' \in E(G)$, by \ref{F1-obs}. Again by \ref{F1-obs}, we have $zb, zb'\notin E(G)$. But then $\{v_3, v_4, z, x, b, b'\}$ induces a $P_2 + P_4$ which is a contradiction.
	
 Next suppose that $x\in A_1$, $y\in A_2\setminus \{a',b'\}$ and $z\in T$. Then since $\{a', b', y,x,  v_1, v_4\}$ does not induce a $P_2 + P_4$, by using \ref{F1-obs}  we may assume that $xa'\in E(G)$ and $xb'\notin E(G)$. But then $\{v_3,v_4,b',a',x,y\}$ induces a $P_2 + P_4$ which is a contradiction.  So the claim holds. $\sq$

\begin{claim}\label{F21-A12T}
  $G[A_1 \cup A_2 \cup T]$ is $K_3$-free.
\end{claim}	
{\it Proof of \cref{F21-A12T}}.~Suppose to the contrary that $G[A_1 \cup A_2 \cup T]$ contains a $K_3$  with vertices, say $x, y$ and $z$. By \cref{F21-Aij}, \cref{F21-K32} and \cref{lemmat}, we may assume that $x\in A_1$ and $y,z\in T$.
First suppose that $x = a$. Then by \ref{F1-obs}, we have $by,bz\notin E(G)$, and we may assume that $ba'\notin E(G)$, and then since $\{y, z, b, v_1, v_2, a'\}$ does not induce a $P_2 + P_4$, we may assume that $ya'\in E(G)$. But then since $\{x,y,a'\}$ does not induce a $K_3$ in   $G[A_1 \cup A_2 \cup T]$ which has  two vertices from $A_1 \cup A_2$, we have $xa'\notin E(G)$ (by \cref{F21-K32}), and then $\{v_3,v_4,b,x,y,a'\}$ induces a $P_2 + P_4$ which is a contradiction; so we may assume that $x\in A_1\setminus \{a,b\}$, by \ref{lemAi}. By \ref{F1-obs}, we may assume that $aa'\notin E(G)$. Then since $\{y,z,a,v_1,v_2,a'\}$ does not induce a $P_2+P_4$, we may assume that $ya\in E(G)$ or $ya'\in E(G)$. If $ya\in E(G)$, then $\{v_3,v_4,x,y,a,b\}$ induces a $P_2+P_4$ (by \ref{F1-obs}); so we may assume that $ya\notin E(G)$ and hence $ya'\in E(G)$. Then since $\{x,y,a'\}$ does not induce a $K_3$ in   $G[A_1 \cup A_2 \cup T]$ which has  two vertices from $A_1 \cup A_2$, we have $xa'\notin E(G)$, by \cref{F21-K32}. But then $\{v_3,v_4,x,y,z,a',b'\}$ induces a $P_2+P_4$ or a $K_4-e$ which is a contradiction. This proves \cref{F21-A12T}.   $\sq$

\begin{claim}\label{F21-chiA12T}
 $\chi(G[\{v_3,v_4\}\cup A_1 \cup A_2\cup T])= 2$.
\end{claim}
{\it Proof of \cref{F21-chiA12T}}.~Since $G[A_1 \cup A_2 \cup T]$ is ($P_4, K_3$)-free (by \cref{F21-Aij} and \cref{F21-A12T}), $G[A_1 \cup A_2 \cup T]$ is a perfect $K_3$-free graph and so   $\chi(G[A_1 \cup A_2\cup T])\leq 2$. Then since   $\{v_3,v_4\}$ is anticomplete to $A_1 \cup A_2\cup T$, we have $\chi(G[\{v_3,v_4\}\cup A_1 \cup A_2\cup T])= 2$.  This proves \cref{F21-chiA12T}. $\sq$

\medskip
\no Now from \cref{F21-Aij} and \cref{F21-chiA12T}, clearly   $\chi(G)\leq \chi(G[\{v_1,v_2\}\cup A_3 \cup A_4])+\chi(G[\{v_3,v_4\}\cup A_1 \cup A_2\cup T])=4$.
\end{proof}

\begin{theorem}\label{ai2}
	%Suppose $\omega(A_i) = 2$ for some $i \in \langle 4\rangle$,
	If $G$ contains an  $F$,	then $\chi(G) = 4$.
\end{theorem}
\begin{proof}
Suppose that $G$ contains an $F$. We label such an $F$ as shown in Figure~\ref{fig}.  Clearly  $a,b\in A_1$. We may assume that $G$ is ($H_1$, $H_2$)-free, by \Cref{ai3,ai22}. So $\omega(G[A_1]) = 2$ and $A_2, A_3$ and $A_4$ are stable sets. Then as in \cref{F21-Aij} of \cref{ai22}, we see that  $\chi(G[\{v_1,v_2\}\cup A_3\cup A_4])=2$  and
 $\chi(G[\{v_3,v_4\}\cup A_1\cup A_2])=2$. So if $T=\emptyset$, then $\chi(G)=4$ and we are done. We show that $T=\emptyset$ by using a sequence of claims given below.

%To proceed further, we let $A_2':= \{u\in A_2\mid ua\notin E(G)\}$ and $A_2'':=\{u\in A_2\mid ub\notin E(G)\}$. Then  $A_2= A_2'\cup A_2''$, by \ref{F1-obs}. Moreover:

%\begin{claim}\label{F2-a1a2}
%$A_1\setminus \{a,b\}$ is anticomplete to $A_2$.
%\end{claim}
%{\it Proof of \cref{F2-a1a2}}.~Suppose to the contrary that there are adjacent vertices, say $u\in A_1\setminus \{a,b\}$ and $v\in A_2$. Then since
%   $\{a,b, u,v,v_2,v_3\}$ does not induce a $P_2 + P_4$, we see that $v$ has a neighbor in $\{a,b\}$ (by \ref{lemAi}), and then $\{v_3, v_4, u,v,a,b\}$ induces a $P_2 + P_4$ which is a  contradiction. So  \cref{F2-a1a2} holds. $\sq$

\begin{claim2}\label{F2-ta-1}
If a vertex, say $t\in T$, is anticomplete to $\{a,b\}$, then it is anticomplete to $A\setminus \{a,b\}$.
\end{claim2}
{\it Proof of \cref{F2-ta-1}}.~For the sake of contradiction, suppose that $tx \in E(G)$ where $x\in A\setminus \{a,b\}$. If $x\in A_2$,  then $\{a, b, t, x, v_2, v_3\}$ or $\{v_3, v_4, t, x, a,b\}$ induces a $P_2 + P_4$ (by \ref{F1-obs}); so we may assume (by symmetry) that $x\in A_1\setminus \{a,b\}$ and that $t$ is anticomplete to $A_2\cup A_3\cup A_4$.  Then since  $\{t, x, a_2^*, v_2, v_3,v_4, a_3^*\}$   does not induce a $P_2 + P_4$, we may assume that $xa_2^* \in E(G)$, and then $\{a, b, t, x, a_2^*, v_2\}$ or $\{v_3, v_4,   x, a_2^*, a, b\}$ induces a $P_2+ P_4$ (by \ref{lemAi} and \ref{F1-obs}), which is a contradiction. So \cref{F2-ta-1} holds. $\sq$

	\begin{claim2} \label{F2-ta}
		Each vertex in $T$ has a neighbor in $\{a,b\}$.
	\end{claim2}
{\it Proof of \cref{F2-ta}}.~Recall that from \cref{lemmat}, we know that $T$ induces a $K_2$ or it is a stable set.
Suppose to the contrary that there is a vertex, say $t\in T$, such that $ta, tb \not\in E(G)$. Thus $t$ is anticomplete to $A$, by \cref{F2-ta-1}. Then since $G$ is connected, clearly  $G[T]\cong K_2$, and there is a vertex, say $t'\in T$, such that $tt'\in E(G)$. Then since $G$ is connected, $t'$ has a neighbor in $\{a,b\}$ (by \cref{F2-ta-1}), and then $\{v_3,v_4,a,b,t,t'\}$ induces a $P_2+P_4$ or $\{v_1,a,b,t'\}$ induces a $K_4-e$ which is a contradiction. So \cref{F2-ta} holds. $\sq$

\begin{claim2}\label{F2-ntind}
 $N(a) \cap T$  and  $N(b) \cap T$ are stable sets.
\end{claim2}
{\it Proof of \cref{F2-ntind}}.~Suppose to the contrary that there are adjacent vertices, say $x, y \in N(a) \cap T$. Then by \ref{F1-obs}, we have $xb, yb \notin E(G)$.  Since  $\{x, y, b, v_1, v_2, v_3, a_2^*\}$ does not induce a $P_2 + P_4$, $a_2^*$ has a neighbor in $\{x,y\}$ and thus  if $a_2^*$ has a nonneighbor in $\{a,x,y\}$, then $\{v_3,v_4, a_2^*,a,b,x,y\}$ induces a  $P_2+P_4$ or a $K_4-e$; so   $a_2^*$ is complete to $\{a,x,y\}$. Likewise,   $a_3^*$ is complete to $\{a,x,y\}$. But then $\{x,y,a,a_2^*,a_3^*\}$ induces a $K_5$ or a $K_4-e$ which is a contradiction. So \cref{F2-ntind} holds. $\sq$

\begin{claim2}\label{F2-twins}
$N(a) \cap T =\emptyset$. Likewise, $N(b) \cap T =\emptyset$.
\end{claim2}
{\it Proof of \cref{F2-twins}}.~Suppose to the contrary that there is a vertex, say $t \in N(a) \cap T$. Then $tb\notin E(G)$, by \cref{F2-ta}.   Then since $b$ and $t$ are not comparable vertices,
there are vertices, say $b'$ and $t'$, such that $tt', bb'\in E(G)$ and $tb', bt'\notin E(G)$. Clearly since $\omega(G[A_1])=2$, we have  $b'\notin A_1\setminus \{a,b\}$, by \ref{lemAi}.  Now if $t'\in (A_1\setminus \{a,b\})\cup T$, then $t'a\notin E(G)$ (by \ref{lemAi} and \cref{F2-ntind}) and then $\{v_3,v_4,b,a,t,t'\}$ induces a $P_2+P_4$; so we may assume (up to symmetry) that $t'\in A_2$. Then since $\{v_3, v_4, b,a, t,t'\}$ does not induce a $P_2 + P_4$, we have $at' \in E(G)$. Now if $b'\in A_2$, then since $A_2$ is a stable set, we have $b't'\notin E(G)$ and so by \ref{F1-obs}, $\{v_3, v_4, b',b,a,t'\}$ induces a $P_2 + P_4$; so we may assume (up to symmetry)  that $b'\in A_4$. Then by \ref{F1-obs}, $b'a\notin E(G)$ and then $\{v_2,v_3,t,a,b,b'\}$ induces a $P_2+P_4$ which is a contradiction.
 So  $N(a) \cap T =\emptyset$.   This proves  \cref{F2-twins}.  $\sq$

\smallskip

Thus by Claims \ref{F2-ta} and \ref{F2-twins}, we conclude  that $T = \emptyset$ and we are done.
\end{proof}

\subsection{$G$ is $F$-free}\label{sec:F2-free}

In this section, we show that if $G$ is $F$-free, then $\chi(G)=4$. First we observe that, since $G$ is $F$-free,  each $A_i$ is a stable set. Also, if $T=\emptyset$, then  we define the following stable sets: $S_1:= \{v_1\} \cup A_4$, $S_2:= \{v_2\} \cup A_1$, $S_3:= \{v_3\} \cup A_2$, and $S_4:= \{v_4\} \cup A_3$ so that $V(G) = S_1 \cup S_2 \cup S_3 \cup S_4$ and thus $\chi(G) = 4$; so we may assume that  $T\neq \es$. Next,  we prove some useful lemmas.

\begin{lemma}\label{lem-H}
 If $G$ is $F$-free, then $G$ is $H_3$-free.
\end{lemma}
\begin{proof}
Suppose to the contrary that $G$ contains an $H_3$. We label such an $H_3$  as shown in Figure~\ref{fig}.  Clearly  $a,b\in A_1$, $a',b'\in A_2$ and $t,t'\in T$. Recall that $a_3^*\in A_3$ and $a_4^*\in A_4$. Moreover, we have the following.
\begin{claim}\label{H-a34t}
 For $j\in \{3,4\}$, if $ta_j^*\in E(G)$,  then $a_j^*$ is anticomplete to $\{a,b,a',b'\}$.% and hence $a_j^*t'\notin E(G)$.
\end{claim}
{\it Proof of \cref{H-a34t}}.~~We prove for $j=3$. Since $\{a_3^*,t,a,b,a',b'\}$ does not induce  a $K_4-e$ or an $F$, $a_3^*$ is anticomplete to $\{a,b,a',b'\}$. % Hence if $a_3^*t'\in E(G)$, then $\{a_3^*,t',a',a,v_1,v_4\}$ induces a $P_2 + P_4$; so $a_3^*t'\notin E(G)$ and
So \cref{H-a34t} holds. $\sq$

\begin{claim}\label{H-a34t-2}
 For $j\in \{3,4\}$: If  $ta_j^*\notin E(G)$ and if $a_j^*$ has a neighbor in $\{a,b\}$, then $a_j^*$   is complete to  $\{t',a,b\}$, and  is anticomplete to $\{a',b'\}$.  Likewise, if   $ta_j^*\notin E(G)$ and if $a_j^*$ has a neighbor in $\{a',b'\}$, then $a_j^*$   is complete to  $\{t',a',b'\}$, and is anticomplete to $\{a,b\}$.
\end{claim}
{\it Proof of \cref{H-a34t-2}}.~We prove for $j=3$. If $ta_3^*\notin E(G)$ and if $a_3^*$ has a neighbor in $\{a,b\}$, then since $\{v_2,v_4, a,t,b,a_3^*\}$ does not induce a $P_2+P_4$, $a_3^*$   is complete to  $\{a,b\}$, and then since $\{a,a',t,b,b', a_3^*\}$ does not induce a $K_4-e$, we see that $a_3^*$   is anticomplete to  $\{a',b'\}$.  Thus if $a_3^*t'\notin E(G)$, then $\{v_2,v_4, t', t, a, a_3^* \}$ induces a $P_2+P_4$.  So \cref{H-a34t-2} holds. $\sq$

Now, up to symmetry, we have three cases:

\vspace{-0.25cm}
 \begin{itemize}[leftmargin=0.5cm]\itemsep=0pt
\item If $ta_3^*, ta_4^*\in E(G)$, then by \cref{H-a34t}, $\{a_3^*,a_4^*\}$ is anticomplete to $\{a,b,a',b'\}$, and then $\{a,a',a_3^*,v_3,v_4,$ $ a_4^*\}$ or
$\{a_3^*,a_4^*, a,a',v_2, $ $b'\}$ induces a $P_2+P_4$ which is a contradiction.
\item If $ta_3^*, ta_4^*\notin E(G)$, then since $\{b', t, a_3^*, v_1,v_3, v_4, a_4^*\}$ does not induce a $P_2 + P_4$, we may assume $b'a_3^* \in E(G)$, and  hence $a_3^*$  is complete to  $\{t',a',b'\}$, and $a_3^*$ is anticomplete to $\{a,b\}$, by \cref{H-a34t-2}; but then $\{v_2, v_4, a_3^*, t', t, a\}$ induces a $P_2 + P_4$ which is a contradiction.

\item Finally, suppose that $ta_3^*\in E(G)$ and $ta_4^*\notin E(G)$. Then by \cref{H-a34t}, $a_3^*$ is anticomplete to $\{a,b,a',b'\}$. Also since $\{a,a',a_3^*, v_3, v_4,a_4^*\}$ or
$\{a_3^*, a_4^*,a',a,v_1, b\}$ does not induce a $P_2 + P_4$, we may assume that  $a_4^*$  is complete to  $\{t',a,b\}$, and  $a_4^*$ is anticomplete to $\{a',b'\}$, by \cref{H-a34t-2}. But then $\{v_1,v_3,a_4^*,t',t,b'\}$ induces a  $P_2 + P_4$ which is a contradiction.
 \end{itemize}
 The above contradiction show that $G$ is $H_3$-free. This proves \cref{lem-H}.
 \end{proof}

\begin{lemma}\label{tk2}
If $G$ is $F$-free and if $G[T]$ is isomorphic  to $K_2$, then $\chi(G) =4$.
\end{lemma}
\begin{proof}Let $T:=\{t,t'\}$ be such that $tt'\in E(G)$.   Now, we claim that $A_i=\{a_i^*\}$ for each $i$. Suppose to the contrary and up to symmetry that there are two vertices in $A_1$, say $a$ and $b$. Then since $a$ and $b$ are not comparable vertices,
there are vertices, say $a'$ and $b'$, such that $aa', bb'\in E(G)$ and $ab', ba'\notin E(G)$. If $a' \in T$ and $b' \in T\cup A_2$, then $\{v_3,v_4,a,a',b',b\}$ or $\{a,a',b,b',v_2,v_3\}$ induces a $P_2 + P_4$; so we may assume that $a',b'\in A_2$ or $a'\in A_2$ and $b'\in A_4$.  Also since  $G$ is $F$-free, $G$ is $H_3$-free, by \cref{lem-H}. First suppose that $a',b'\in A_2$. Then since $\{t,t',a,a',v_2,v_3\}$ does not induce a $P_2+P_4$, we may assume that $at\in E(G)$, and so $t$ is complete to $\{a,a',b,b'\}$, by \ref{lem-cru}. Then since $\{a, a', b, b',t,t'\}$ does not induce a $K_4-e$ or an $F$, clearly $t'$ is anticomplete to  $\{a,a',b,b'\}$. But then $C\cup \{a, a', b, b',t,t'\}$ induces an $H_3$ which is a contradiction. So, suppose that $a'\in A_2$ and  $b'\in A_4$. Then since $\{a,a',b,b',v_4,v_3\}$ does not induce a $P_2+P_4$, we have $a'b'\in E(G)$. Moreover we claim that:

\begin{claim}\label{tk2-case2}
The set $\{a,b\}$ is anticomplete to $T$.
\end{claim}
{\it Proof of \cref{tk2-case2}}.~If $a$ and $b$  have a common neighbor in $T$, say $t$, then since $\{v_3,v_4,a',a,t,b\}$ and  $\{v_2,v_3,b',b, $ $ t,a\}$ do not induce ($P_2+P_4$)'s, we have $ta',tb'\in E(G)$, and then $\{a,t,a',b'\}$ induces a $K_4-e$; so $a$ and $b$ do not have a common neighbor in $T$. Now if we assume (up to symmetry) that $bt\in E(G)$, then one of $\{v_3,v_4,b,t,a',a\}$ or $\{v_3,v_4,t,t',a',a\}$  or $\{v_3,v_4,b,t,t',a\}$ or  $\{t,t',a,a',v_2,v_3\}$  induces a $P_2+P_4$ which is a contradiction. So \cref{tk2-case2} holds. $\sq$

 \begin{claim}\label{tk2-case2-2}
The set $\{a',b'\}$ is complete to $T$.
\end{claim}
 {\it Proof of \cref{tk2-case2-2}}.~Since $\{t,t',b,v_1,a,a'\}$ does not induce a $P_2+P_4$, we see that $a'$ has a neighbor in $\{t,t'\}$ (by \cref{tk2-case2}), and then since $\{v_3,v_4,a,a',t,t'\}$ does not induce a $P_2+P_4$, $a'$ is complete to $\{t,t'\}$.  Likewise, $b'$ is complete to $\{t,t'\}$. This proves \cref{tk2-case2-2}. $\sq$

Now since $\{t,t',a,v_1,v_2,v_3, a_3^*\}$ does not induce a $P_2+P_4$, clearly $a_3^*$ has a neighbor in $\{t,t'\}$, say $t$. Also since $\{a',b',t,t'\}$ is a clique (by \cref{tk2-case2-2}), we conclude that $N(a_3^*)\cap \{a',b',t,t'\} =\{t\}$ or $N(a_3^*)\cap \{a',b',t,t'\} =\{a',b',t,t'\}$, by \cref{obs1}$(i)$.  But then $\{a',t',b,v_1,v_3,v_4, a_3^*\}$ induces a $P_2+P_4$ or $\{a_3^*,a',b',t,t'\}$ induces a $K_5$ which is a contradiction.

\medskip
So we conclude that $G[A_i] \simeq K_1$, and hence $V(G) = \{v_1, v_2, v_3, v_4, a_1^*, a_2^*, a_3^*,a_4^*, t, t'\}$. So $G$ is a ($P_2+P_4,  K_4-e, K_5, F$)-free graph with $10$ vertices. Now it is not hard to verify that $\chi(G)=4$. %. Suppose $G[a_{11}, a_{21}, a_{31}, a_{41}]$ is $K_3$-free and so $\chi(G[a_{11}, a_{21}, a_{31}, a_{41}]) \leq 2$. It is easy to verify that  $\chi(G) = 4$. Now, suppose $K_3 \subseteq G[a_{11}, a_{21}, a_{31}, a_{41}]$. Since $G$ is $(K_4 - e)$-free, vertex $t$ (and $t'$) is either adjacent to exactly one $a_{i1}$ for some $i \in \{1,2,3,4\}$ or complete to $\{a_{11}, a_{21}, a_{31}, a_{41}\}$. If $t$ (or $t'$) is adjacent to exactly one $a_{i1}$, then $deg(t) \leq 2$ (or $deg(t') \leq 2$), and so by \cref{good}, $\chi(G) \leq 4$. Else, $\{t, t', a_{11}, a_{21}, a_{31}, a_{41}\}$ contains a $K_5$, a contradiction. So, \cref{tk2} holds.
\end{proof}

\setcounter{claim2}{0}

\begin{theorem}\label{lemma-F2-free}
If $G$ is $F$-free, then $\chi(G)= 4$.
\end{theorem}
\begin{proof}
From \Cref{lemmat,tk2}, we may assume that  $T$ is a stable set.
Next, we show the following.

\begin{claim2}\label{tk1-T}
$T$ is a singleton set.
\end{claim2}
{\it Proof of \cref{tk1-T}}.~Suppose to the contrary  that there are two vertices in $T$, say $t$ and $t'$. Then since $t$ and $t'$ are not comparable vertices,
there are vertices, say $a$ and $b$, such that $ta, t'b\in E(G)$ and $tb, t'a\notin E(G)$. Since $T$ is a stable set, we may assume that $a,b\in A$. Now if $a\in A_1$ and $b\in A_2$, then  $\{t,a,t',b,v_2,v_3\}$ or $\{v_3,v_4,t,a,b,t'\}$ induces a $P_2 + P_4$; so we may assume that $a,b\in A_1$. Then since $\{t,a,a_2^*,v_2,v_3,v_4,a_3^*\}$  does not induce  a $P_2+P_4$, we may assume that $a_2^*$ has a neighbor in $\{a,t\}$, and then since $\{t',b,a,t,a_2^*,v_2,v_3\}$ or $\{v_3,v_4,a_2^*,t,a,t',b\}$ does not induce  a $P_2 + P_4$, we conclude that $a_2^*$ is complete to
   $\{t,a,t',b\}$. Likewise, by using similar arguments, we may assume that $a_3^*$ has a neighbor in $\{a,t\}$, and hence $a_3^*$ is also complete to $\{t,a,t',b\}$. But then $\{a_2^*,a_3^*,t,a,t'\}$ induces a $K_4-e$ which is a contradiction. So $T$ is a singleton set. This proves \cref{tk1-T}. $\sq$

\medskip

By \cref{tk1-T}, we let $T = \{t\}$. Now if $t$ is anticomplete to $A_i$, for some $i$, say $i = 1$, then we define the following stable sets: $S_1:= \{v_1\} \cup A_4$, $S_2:= \{v_2, t\} \cup A_1$, $S_3:= \{v_3\} \cup A_2$, and $S_4:= \{v_4\} \cup A_3$ so that $V(G) = S_1 \cup S_2 \cup S_3 \cup S_4$ and thus $\chi(G) = 4$; so we may assume that  $t$ has a neighbor in $A_i$, for every $i$; and let  $ta_i^*\in E(G)$.

\begin{claim2}\label{tk1-Ai}
For each $i$,	we have $A_i =\{a_i^*\}$.
\end{claim2}
{\it Proof of \cref{tk1-Ai}}.~We prove for $i=1$. Suppose to the contrary that there are vertices in $A_1$, say $a~(=a_1^*)$ and $b$. Since $a$ and $b$ are not comparable vertices, there are vertices, say $a'$ and $b'$, such that $aa', bb'\in E(G)$ and $ab', ba'\notin E(G)$. Now if (up to symmetry)   $a'=t$ and $b'\in A_2$, then since $\{a,a', b,b',v_2,v_3\}$ does not induce a $P_2 + P_4$, we have $a'b'\in E(G)$ and then $\{v_3,v_4,a,a', b',b\}$  induces a $P_2 + P_4$; so we may assume that $a',b'\in A_2$ or $a'\in A_2$ and $b'\in A_4$. Now if $a'\in A_2$ and  $b'\in A_4$, then as in the proof of \cref{tk2-case2}, we see that $bt\notin E(G)$, and then  $\{a,t, b,b', v_4,v_3\}$ or $\{v_2,v_3, b,b', t,a\}$ induces a $P_2+P_4$ which is a contradiction; so suppose that $a',b'\in A_2$. Then since $at\in E(G)$, clearly $t$ is complete to    $\{a,a',b,b'\}$, by \ref{lem-cru}. Moreover, since  $ta_3^*, ta_4^*\in E(G)$, as in \cref{H-a34t},   we conclude that  $\{a_3^*,a_4^*\}$ is anticomplete to $\{a,b,a',b'\}$. But then $\{a,a',a_3^*, v_3,v_4,a_4^*\}$ or $\{a_3^*,a_4^*,a,a',v_2,b'\}$ induces a $P_2 + P_4$ which is a contradiction. So \cref{tk1-Ai} holds. $\sq$

\medskip
 Thus from above claims, we observe that $V(G) = \{v_1, v_2, v_3, v_4, a_1^*, a_2^*, a_3^*,a_4^*, t\}$. So $G$ is a ($P_2+P_4, K_4-e, K_5, F$)-free graph with $9$ vertices. Now it is not hard to verify that $\chi(G)=4$.
%Then since $\{v_3,v_4,a,a',b,b'\}$ does not induce a $P_2+P_4$, we have $a'b'\in E(G)$. Then, as in \cref{tk2-case2}, we see that $bt\notin E(G)$. Then since
%$\{a,t, b,b', v_3,v_4\}$ does not induce a $P_2+P_4$, we have $tb'\in E(G)$, and then since $\{a,t,b',a'\}$ does not induce a $K_4-e$, we have $ta'\notin E(G)$. Hence $a'\neq a_2^*$. Then since $\{v_3,v_4,a',a,t,a_2^*\}$ does not induce a $P_2+P_4$, we have $a_2^*a\in E(G)$ and  then since $\{a,t,a_2^*,b'\}$ does not induce a $K_4-e$, we have $a_2^*b'\notin E(G)$. But then $\{v_1,v_3,a',b',t,a_2^*\}$ induces a $P_2+P_4$ which is a contradiction.
\end{proof}

\medskip
      \no{\bf Proof of \cref{mainthm-2}}.
 This follows from \cref{ai2,lemma-F2-free}. \hfill{$\Box$}

%%%%%%%%%%%%%%%%%%%%%%%%%%%%%%%%%%%%%%%%%%%%%%%%%%%%%%%%%%%%%%%%%%%%%%%%%%%%%%%%%%%%%%%%%%%%%%%%%%%%%%%%%%%%%%%%%%%%%%%%%%%%%%%%%%%%%%
 \bigskip
\noindent{\bf Acknowledgement}.
The first author acknowledges the National Board of Higher Mathematics (NBHM), Department of Atomic Energy, India for the financial support (No.  02011/55/2023/R\&D-II/3736) to carry out this research work, and CHRIST (Deemed to be University), Bengaluru for permitting to pursue the NBHM project.

{\small

}

\end{document}